\theoremstyle{plain}
\newtheorem{thm}{Theorem}[section]
\newtheorem{lmm}[thm]{Lemma}
\newtheorem{crl}[thm]{Corollary}
\numberwithin{equation}{section} 
\numberwithin{figure}{section} 
\newtheorem{prp}[thm]{Proposition}
\theoremstyle{remark}
\newtheorem{rmk}[thm]{Remark}
\newtheorem{xpl}[thm]{Example}
\newtheorem*{acknowledgement*}{Acknowledgement}
\theoremstyle{definition}
\newtheorem{dfn}[thm]{Definition}
\def\eps{\varepsilon}
\newcommand{\R}{\mathbb R}
\newcommand{\bV}{\overline V}
\newcommand{\bO}{\overline \Omega}
\newcommand{\pV}{\partial V}
\newcommand{\pO}{\partial \Omega}
\newcommand{\Winf} {W^{1,\infty}}
\newcommand{\phizero}{\varphi^{0}}
\def\esup{\mathop\mathrm{ess\,sup\,}}
\newcommand{\Leb}{\mathcal{L}}
\newcommand{\Rd}{\mathbb{R}^{d}}
\newcommand{\pathbar}{\overline{\mathsf{path}}}
\newcommand{\Sp}{S^{+}}
\newcommand{\Sm}{S^{-}}
\newcommand{\K}{K(x)}
\newcommand{\grad}{\nabla}
\newcommand{\gradu}{\nabla u}
\newcommand{\WinfCo}{W^{1,\infty}(\Omega)\cap C(\overline{\Omega})}
\newcommand{\interior}{\mathrm{int} \,}
\def\argmax{\mathop\mathrm{arg\,max\,}}
\begin{document}
	\title[]{On functions with given boundary data and convex constraints on the gradient}
	\author{Camilla Brizzi}
	\address{{\bf C.B.} Dipartimento di Matematica ed Informatica,
		Universit\'a di Firenze, Viale Morgagni, 67/a - 50134 Firenze, ITALY}
	\begin{abstract}
		Let $\Omega\subset\Rd$ an open set. Given a boundary datum $g$ on $\pO$ and a function $K:\bO\to\mathcal{K}$, the family of all compact convex sets of $\Rd$, we prove the existence of functions $u:\Omega\to\R$ such that $u=g$ on $\pO$ and $\gradu(x)\in K(x)$ a.e. and we investigate the regularity of such solutions on the set $\mathcal{U} \subset \bO$ of points at which they all coincide. 
	\end{abstract}

	\keywords{constraint on the gradient, convex set, interpolation, supremal convolution, supremal functional, multivalued partial differential equations, $L^{\infty}$ Variational Problems, finsler metric, semiconcave, uniqueness set.}
	\subjclass[2010]{49K24 49N15 49N60 35R70 58B20 }
	\date{\today}
	\maketitle	
	\section{Introduction}
	In this article we consider the problem of interpolating a given real boundary values $g$ into a connected and bounded open set $\Omega\subset\Rd$ under a pointwise constraint on the gradient, i.e. let $g\in C(\pO)$, we consider the solutions $u\in \Winf(\Omega)\cap C(\bO)$ of the following system
	\begin{equation}
		\label{problem}
		\begin{cases}
			u=g \quad &\mbox{on} \ \pO,\\
			\nabla u(x)\in K(x) \quad &\mbox{for a.e. }x \in \Omega,
		\end{cases}
		\tag{P}
	\end{equation}
	where $K(x)$ belongs to $\mathcal{K}$, the family of compact convex sets included in $\Rd$ and $K(x)$ is continuous w.r.t. the Hausdorff distance. This is a classical problem and the case $K(x):=B(0,f(x))$ has been studied by Aronsson in \cite{Aro2009}.\\
	In order to study existence and some properties of solutions of problem \eqref{problem}, we endow $\Omega$ with a metric that depends on $K(x)$. More precisely, 
	for each $x\in\bO$, we define the \textit{Minkowski functional} $\varphi(x,\cdot)$ associated to the closed convex set $K(x)$ and its \textit{support function} $\phizero(x,\cdot)$:
	\begin{equation*}
		\phizero(x,q):=\sup\left\{p\cdot q \ | \ p\in K(x) \right\},
	\end{equation*} 
	for which we prove continuity and the existence of an optimal $p$ for every $x\in\bar{\Omega}$ and $q\in\Rd$ (see Proposition \ref{phizeropuntuale} and Proposition \ref{continuityphizero}). We also give a further result presented in Proposition \ref{differentiabilityphizero} showing that under the assumption of some regularity of $\varphi$ and of strict convexity of $K(x)$ for every $x$ one can obtain $\phizero(\cdot,q)\in C^{1}(\Omega)$.\\ 
	The reason to consider such a functional is that $\phizero$ is a Finsler metric (for a more general definition of Finlser metric and more details, see \cite{DecPal1995,BaoCheShe2000}), so that we can consider the associated  non symmetric distance $d=d_{\phizero}$, defined for every $x,y\in\Omega$, as follows
	\begin{equation}
		\label{d}
		d(x,y):=\inf\left\{\int_{0}^{l(\gamma)}\phizero(\gamma(s),\dot{\gamma}(s))ds: \gamma\in\mathsf{path}(x,y)\right\},
	\end{equation}
	where $\mathsf{path}(x,y)$ is the set
	\begin{equation*}
		\{\gamma\in \Winf((0,l(\gamma)), \Omega)\cap C([0,l(\gamma)],\Omega) \ : \ \gamma(0)=x, \ \gamma(l(\gamma))=y, |\dot{\gamma}(s)|=1 \ \mbox{a.e.}\}.
	\end{equation*}
	As showed in Definition \ref{distance}, this pseudo-distance can be extended also to $\pO$. Moreover we provide a proof (see Remark \ref{equivalenced}) of the already known fact: 
	\begin{equation}
		\label{finslerdistance}
		d(x,y):=\sup\left\{u(y)-u(x): u\in\Winf(\Omega)\cap C(\bO), \ \gradu(x)\in\K \ \mbox{a.e. in }\Omega \right\},
	\end{equation}
	where the left hand side of the equality is a distance introduced in \cite{DecPal1995}.
	\\ The existence of a solution for \eqref{problem} is not trivial in general. However, if we assume that the boundary datum $g$ is $1$-lipschitz w.r.t. the distance $d$, the problem \eqref{problem} always admits a solution. Indeed in Proposition \ref{S+S-lipschitz} and Corollary \ref{S+S-gradient} we prove that the functions 
	$S^{-}$ and $S^{+}$, respectively called \textit{maximal} and \textit{minimal extension}, defined for any $x\in\bO$ by:
	\begin{align*}
		&S^{-}(x)=\sup\left\{g(y)-d(x,y): y\in\pO\right\},\\
		&S^{+}(x)=\inf\left\{g(y)+d(y,x): y\in\pO\right\},
	\end{align*}
	satisfy  $S^{+}(y)=S^{-}(y)=g(y)$ for all $y\in\pO$ and $\nabla S^{+}(x), \nabla S^{-}(x)\in K(x)$ for a.e. $x$. For every $u$ solution of \eqref{problem} (Proposition \ref{solutions}), 
	\begin{equation*}
		S^{-}\le u \le S^{+}.
	\end{equation*}
	The proofs of these facts are inspired by the proofs of Proposition 2.7, 2.8 and 2.9 in \cite{ChaDep2007}. \\
	Moreover if there exists a solution $u\in \Winf(\Omega)\cap C(\bO)$ of \eqref{problem}, then $g$ turns out to be lipschitz w.r.t. the distance $d$ (see Proposition \ref{1lipu} and Remark \ref{lipurmk}).\\
	In this paper we are interested in the points of $\Omega$ on which the upper and lower solutions coincide and we present some results that extend to more general $K(x)$ the results proven by Aronsson in \cite{Aro2009} for the case $K(x):=B(0,f(x))$, with $f\in C^{1}(\Omega)$ and $f$ bounded from below and above.
	To this aim, in Proposition \ref{existencecurve} and Proposition \ref{existencecurvebondary}, we first show the existence of optimal curves for \eqref{d}, namely we prove that there exists $\gamma$ such that $d(x,y)=\int_{0}^{l(\gamma)}\phizero(\gamma(s),\dot{\gamma}(s))ds$.\footnote{The proof we present makes use of simple tools from functional analysis. However, the existence of the minimizing curve can be inferred in a more standard way by Teorem 4.3.2 of \cite{AmbTil2004}.}
	A consequence of this result (see Proposition \ref{maxminextensioncurves}) is that also the definitions of $S^{+}$ and $S^{-}$ admit respectively a minimum and a maximum, i.e. there exist $y_{1},y_{2}\in\pO$ and $\gamma_{1},\gamma_{2}$, such that:
	$S^{+}(x)=g(y_{1})+\int_{0}^{l(\gamma_{1})}\phizero(\gamma_{1}(s),\dot{\gamma}_{1}(s))ds$  and
	$S^{-}(x)=g(y_{2})-\int_{0}^{l(\gamma_{2})}\phizero(\gamma_{2}(s),\dot{\gamma}_{2}(s))ds$.\\
	The main result is presented in Theorem \ref{curveofcoincidence} where we show that if there exists a point $x_{0}\in\Omega$ such that $\Sp(x_{0})=\Sm(x_{0})$, then there exists a curve, all contained in $\Omega$ except for the extreme points, that connects two points of the boundary, along which $\Sp$, $\Sm$ and every solution $u$ of \eqref{problem} coincide. Such a curve can be reparameterized  to be $1$-lipschitz and it is optimal for \eqref{d}. Moreover, all the solutions of \eqref{problem} are derivable $\mathcal{H}^{1}$-a.e. along that curve with curve derivative in each point equal to $\phizero$ in that point. We refer to the set of the points where $\Sp=\Sm$ as \textit{uniqueness set}.
	Under suitable assumption on $\phizero$ or $d$ it is possible to prove (see Proposition \ref{semiconcavity1} and Proposition \ref{semiconcavity2}) that the maximal and minimal extensions are respectively locally semiconvex and semiconcave. Thanks to this fact we are able to prove a further regularity result (Th. \ref{regularity}): every solution $u$ of \eqref{problem} is continuously differentiable on the uniqueness set. \\
	
	In the literature there are several examples of variational functionals with constraints on the gradient. An interesting example is the model of the dielectric breakdown (see \cite{ChaDep2001,GaNePo2001}), in which a body, subject to an electric field $\nabla u$, behaves as an insulator if $\nabla u(x)$ belongs to a convex set $K(x)$ for a.e. $x$, otherwise the dielectric breakdown occurs and the body starts to conduct. 
	Another important application is that the problem \eqref{problem} can be restated in the framework of supremal functionals of the form
	\begin{equation}
		\label{supfunctional}
		F(u):=\esup_{x\in\Omega}H(x,\nabla u(x)), \quad u\in\WinfCo.
	\end{equation}
	Indeed, as it has been shown first in \cite{BarJenWan2001}, natural assumptions to have weak* lower semicontinuity of $F$ are the lower semicontinuity and quasi-covexity (convexity of the sublevel sets) w.r.t. the second variable of the supremand $H$. Then, requiring also the coercivity of the functional $H(x,\cdot):\Rd\to\R$, as shown in Section \ref{section 5},if one calls
	\begin{equation*}
		\label{varprob}
		\mu:=\min\left\{F(v,\Omega):=\esup_{x\in\Omega}H(x,\nabla v(x)) \ : \ v\in g+W^{1,\infty}(\Omega)\cap C_{0}(\Omega)\right\},
	\end{equation*}
	finding a minimizer for the supremal variational problem above is equivalent to solving
	\begin{equation*}
		\begin{cases}
			u=g \quad &\mbox{on} \ \pO,\\
			H(x,\nabla u(x))\le\mu \quad &\mbox{for a.e. }x \in \Omega,
		\end{cases}
	\end{equation*}
	that is of the form of \eqref{problem}, thanks to the assumptions on $H$. In  the case considered by Aronsson in \cite{Aro2009}, for example, the supremal functional associated is 
	\begin{equation*}
		F(u):=\esup_{x\in\Omega}\frac{|\nabla u(x)|}{f(x)}, \quad u\in\WinfCo.
	\end{equation*} 
	One first result of this application (see Proposition \ref{uniqincludedattain}) is that if $x$ is a point of the uniqueness set and $u$ is a solution of \eqref{supfunctional} that is therein differentiable, then $H(x,\nabla u(x))=\esup_{x\in\Omega}H(x,\gradu(x))$. Moreover, as showed in \cite{BriDep2022}, it is possible to extend the definition of $H(x,\nabla u(x))$ to the points where $u$ is not differentiable (see Definition \ref{punctualextension}) and to prove the same equality for every $x$ in the uniqueness set (see Proposition \ref{uniqincludedattainbis}). \\
	Supremal functionals of the form \eqref{supfunctional} have been widely studied in the last years (see for instance \cite{Aro1965, Aro1966, GarWanYu2006,CraWanYu2009,Yu2006,ChaDep2007,Pri2008} and many others) and they can be seen as a generalization of the more known case $H(x,p)=|p|$, whose notoriety comes by the fact that the \textit{absolute minimizers} of the supremal variational problem associated are solutions of the well known $\infty$-\textit{Laplacian} equation
	\begin{equation*}
		\Delta_{\infty} u:=\left< Du,D^{2}uDu\right>=\sum_{i,j=1}^{n}u_{x_{i}}u_{x_{j}}u_{x_{i}x_{j}}=0.
	\end{equation*}
	Concerning this topic we refer to \cite{Aro1967, CraEvaGar2001, AroCraJuu2004} and the references therein. Absolute minimizers for a problem like \eqref{supfunctional} are, roughly speaking, functions that minimize the functional $F$ in any open set well contained in $\Omega$ (see Definition \ref{AM} for the rigorous definition) and due to the non-convex and non-local nature of supremal variational problems, it turns out to be an important a widely studied class of minimizers. In our setting, under the same assumptions of the ones used in \cite{BriDep2022}, it is possible to improve Proposition \ref{uniqincludedattainbis} proving that if $u$ is an absolute minimizer, also the other inclusion holds, that means that the uniqueness set exactly coincides with the set of points $x$ where $H(x,\nabla u(x))=\esup_{x\in\Omega}H(x,\gradu(x))$ (see Theorem \ref{attainincludeduniq}). This last set has its own interest in the framework of supremal functionals because it provides a further minimality property of the absolute minimizers (see \cite{BriDep2022}). \\
	Finally, convex constraints on the gradient appear also in the Monge- Kantorovich transportation problem, in which one seeks to maximize
	\begin{equation*}
		\int u(x)d\mu(x)- \int u(x)d\nu(x),
	\end{equation*} 
	among all the $u$'s such that $\gradu(x)\in K(x)$ a.e. (see for instance \cite{Vil2009}).
	\\
	This paper is organized as follows: Section \ref{section2} is devoted to some preliminary definitions and results about regularity properties of the \textit{support function} $\phizero$ and the distance $d$. Furthermore we show the existence of optimal curves for $d$. In Section \ref{section3} we present the \textit{maximal} and \textit{minimal extension} functions, proving some properties of the solutions of \eqref{problem} and giving a representation result for $S^{+}$ and $S^{-}$. The last part of the section is dedicated to Theorem \ref{curveofcoincidence} which provides a structure result concerning the set where $S^{+}$ and $S^{-}$ coincide and a small regularity result about $S^{+}$ and $S^{-}$ on this set. In Section \ref{section4} we recall the definition of semiconcavity and semiconvexity and in Proposition \ref{semiconcavity1} and \ref{semiconcavity2} we prove that under suitable assumptions $S^{+}$ and $S^{-}$  are respectively locally semiconcave and locally semiconvex. This fact allows for Theorem \ref{regularity}, where it is showed that every solution $u$ of \eqref{problem} is continuously differentiable on the set where $S^+$ and $S^-$ coincide. In Section \ref{section 5} we apply the results of the previous sections to the case of supremal functionals defined by \eqref{supfunctional}. Finally, in the Appendix \ref{appendix}, we provide the proof of some results which are useful in Section \ref{section2}.
	
	\section{Existence of minimizing curves} \label{section2}	
	Let $\Omega$ be a bounded  and connected open set of $\mathbb{R}^{d}$. For any $x\in\bO$, $K(x)$ will be a compact convex set. 
	\begin{dfn}
		For any  $x\in\bO$, we define the \textit{Minkowski functional} associated to  $K(x)$, as:
		\begin{equation*}
			\varphi(x,p):=||p||_{K(x)}=\inf\left\{t>0 \, : \, \frac{p}{t}\in K(x)\right\}.
		\end{equation*}
		
		It is not difficult to prove that:
		\begin{enumerate}[(i)]\item $\varphi(x,p)\ge 0$ for every $(x,p)\in(\bO\times\Rd)$ and $\varphi(x,p)=0$ if and only if $p=0$;
			\item $\varphi(x,r p)=r\varphi(x,p)$ for all $r>0$;
			\item $\varphi(x,p_{1}+p_{2})\le\varphi(x,p_{1})+\varphi(x,p_{2})$, for all $p_{1},p_{2}\in\Rd$.
		\end{enumerate} 
		That is why $\varphi$ is not exactly a norm: the property (ii) tells that $\varphi$ is positively $1$-homogeneous but not absolutely. Indeed it can happen that $\varphi(x,p)\not=\varphi(x,-p)$. However thanks to (ii) and (iii) we still have the convexity of $\varphi(x,\cdot)$.
	\end{dfn}
	\begin{rmk}
		It holds:
		\begin{align*}
			&p\in K(x) \Leftrightarrow \varphi(x,p)\le 1\\
			&p\in\partial K(x)\Leftrightarrow \varphi(x,p)=1.
		\end{align*}
	\end{rmk}
	We will also requires that:
	\begin{enumerate}[(a)]
		\item \label{alphaM} there exist $\alpha,M>0$ such that $B(0,\alpha)\subset K(x)\subset B(0,M)$, for every $x\in\bO$;
		\item \label{continuityphi} $\varphi(\cdot,p)$ is continuous for every $p\in\Rd$.
	\end{enumerate}
	\begin{rmk}
		Assuming the continuity of $\varphi(\cdot,p)$ in \eqref{continuityphi} is equivalent to asking for the continuity of $K:\Omega\to\mathcal{K}$ w.r.t. the Hausdorff distance, as shown in Proposition \ref{hausdorffdistance} below.
	\end{rmk} 
	First of all we prove that $\varphi$ is continuous also w.r.t. the second variable.
	
	\begin{prp}
		\label{continuityphip}
		$\varphi(x,\cdot)$ is continuous w.r.t the Euclidean distance, uniformly w.r.t. $x\in\bO$.
	\end{prp}
	\begin{proof}
		We claim that \begin{equation}
			\label{normequivalence}
			\frac{|p|}{M}\le \varphi(x,p)\le\frac{|p|}{\alpha}, \quad \text{for all }x\in\bO. 
		\end{equation}
		For every $t<\frac{|p|}{M}$, $\frac{p}{t}\notin K(x)$, because $\left|\frac{p}{t}\right|$ is greater than $M$, that is not possible by assumption \eqref{alphaM}. Then the left inequality is proved. The proof of right one, comes easily by the fact that again by the assumption \eqref{alphaM}
		\begin{equation*}
			\alpha\frac{p}{|p|}\in K(x),
		\end{equation*}
	\end{proof}
	\begin{crl}
		\label{fullcontinuityphi}
		The functional $\varphi:\bO\times\Rd\to\R$ is continuous.
	\end{crl}
	\begin{proof}
		The proof is a direct consequence of Proposition \ref{continuityphip} and assumption \eqref{continuityphi}.
	\end{proof}
	
	\begin{prp}
		\label{hausdorffdistance}
		Let $x\in\bO$ and $(x_{n})_{n\in\mathbb{N}}$ a sequence converging to $x$. Then, for all $p\in\Rd$ 
		\begin{equation*}
			\varphi(x_{n},p)\longrightarrow\varphi(x,p)\iff K(x_{n})\overset{H}{\longrightarrow}K(x),
		\end{equation*}
		where on the right side we mean the convergence w.r.t. to the Hausdorff distance for sets.
	\end{prp} 
	\begin{proof}
		In order to lighten the notation, we write $K:= K(x)$ and $K_{n}:=K(x_{n})$.\\
		We first assume that $\varphi$ is continuous w.r.t $x$. 
		Let $\eps>0$, then there exists $n_{0}\in\mathbb{N}$ such that 
		\begin{equation}
			\label{epsilonphi}
			|\varphi(x_{n},p)-\varphi(x,p)|<\eps, \quad  \mbox{for all } n\ge n_{0}.
		\end{equation}
		We recall that $d_{H}(K_{n},K)=\max\left\{\rho(K_{n},K), \rho(K,K_{n}) \right\}$, where
		\begin{equation*}
			\rho(K_{n},K):=\sup_{p\in K_{n}} dist(p,K)\footnote{With this notation we mean the usual distance to a set w.r.t. the Euclidean distance, i.e. \[dist(p,K):=\inf_{q\in K}|p-q|\].} \ \mbox{and} \ \rho(K,K_{n}):=\sup_{p\in K} dist(p,K_{n}).\footnote{For more details about Hausdorff distance see e.g. \cite{HenMic2018}.}
		\end{equation*}
		
		Let $n\ge n_{0}$. Then for every $p\in K$ $\frac{p}{1+\eps}\in K_{n}$, since, by \eqref{epsilonphi}, $$\varphi(x_{n},p)<\varphi(x,p)+\eps\le1+\eps.$$ 
		That means that
		\begin{equation*}
			dist(p,K_{n})\le \left|p-\frac{p}{1+\eps}\right|=\frac{\eps|p|}{1+\eps}\le \frac{\eps}{1+\eps}M,
		\end{equation*}
		and thus $\rho(K,K_{n})<\frac{\eps}{1+\eps}M$. Let us now consider $p\in K_{n}$, then as above $\frac{p}{1+\eps}\in K$ and $\rho(K_{n},K)<\frac{\eps}{1+\eps}M$.\\
		We now prove the converse implication. Let $p\in\Rd$, proving that 
		\begin{equation*}
			\lim_{n\to+\infty}\varphi(x_{n},p)=\varphi(x,p)
		\end{equation*}
		is equivalent to show that 
		\begin{equation*}
			\lim_{n\to\infty}\varphi(x_{n},p)=\varphi(x,p)=1, \quad \mbox{for every }p\in\partial K.
		\end{equation*}
		Indeed for a generic $p\in\Rd$ such that $\varphi(x,p)=\bar{t}$, for some $\bar{t}>0$, the positive $1$-homogeneity of $\varphi$ implies 
		\begin{equation*}
			\varphi(x,p)=\bar{t}\Leftrightarrow\varphi(x,\frac{p}{\bar{t}})=1\Leftrightarrow\frac{p}{\bar{t}}\in\partial K.
		\end{equation*}
		Thus we restrict ourselves to $p\in\partial K$. Then, by convexity of $K$,
		\begin{equation*}
			\quad dist(\beta p,K)>0, \ \mbox{for every }\beta>1.
		\end{equation*} 
		Let $\{\beta_{k} \}$ a decreasing sequence of real numbers converging to $1$. Let $\eps>0$. Then for $k$ big enough,
		\begin{equation*}
			\eps>dist(\beta_{k}p,K)>\eta>0.
		\end{equation*} 
		Since $K_{n}{\overset{H}{\longrightarrow}}K$, for every fixed $k$, there exists $N_{k,\eta}\in\mathbb{N}$, such that 
		\begin{equation*}
			d(\beta_{k}p,K_{n})>\eta>0, \quad \mbox{for every } n\ge N_{k,\eps}\footnote{In order to see this one can use the fact that \[d_{H}(K_n,K)=\sup_{p\in\Rd}\left|dist(p,K_n)-dist(p,K)\right|\]},
		\end{equation*}
		which implies $\varphi(x_n,\beta_{k}p)>1$.
		
		By the triangular inequality, 
		\begin{equation*}
			\varphi(x_{n},p)\ge	\varphi(x_{n},\beta_{k}p)-\varphi(x_{n},\beta_{k}p-p)\ge 1 - \frac{|\beta_{k}p-p|}{\alpha},
		\end{equation*}
		where the last inequality is due to Proposition \ref{continuityphip}.
		We infer that 
		\begin{equation*}
			\liminf_{n\to+\infty}\varphi(x_{n},p)\ge 1-\frac{\eps}{\alpha}.
		\end{equation*}
		By the arbitrary choice of $\eps$, we get $\liminf_{n\to+\infty}\varphi(x_{n},\beta_{k}p)\ge 1$.\\
		We conclude the proof showing that $\limsup_{n\to+\infty}\varphi(x_{n},p)\le 1$. 
		Let $n\in\mathbb{N}$, if $p\in K_{n}$, then $\varphi(x_{n},p)\le 1$, if, instead, $p\not\in K_{n}$, we want to estimate $\varphi(x_{n},p)$ from above. Let $p^{n}\in \partial K_{n}$, such that $dist(p,K_{n})=|p-p^{n}|$, then:
		\begin{equation*}
			\varphi(x_{n},p)\le\varphi(x_{n},p^{n})+\varphi(x_{n},p-p^{n})\le 1+\frac{|p-p^{n}|}{\alpha}\le1+\frac{d(p,K_{n})}{\alpha}.
		\end{equation*}
		The proof is concluded recalling that $d(p,K_{n})\to d(p,K)=0$.
	\end{proof}	
	
	\begin{dfn} \label{defphizero}
		For any $x\in\bO$ and for any $q\in\mathbb{R}^{d}$, we define  $\phizero:\bO\times\mathbb{R}^{n}\to\mathbb{R}$, the \textit{support function} of $K(x)$, as
		\begin{equation*}
			\phizero(x,q):=\sup\left\{p\cdot q \, : \, p\in K(x) \right\}=\sup\left\{p\cdot q \, : \, \varphi(x,p)\le 1 \right\}.
		\end{equation*} 
	\end{dfn}
	\begin{rmk}
		\label{aboutphizero}
		Clearly $\phizero\ge 0$ and $\phizero(x,q)=0$ if and only if $q=0$. Moreover it is a Finsler metric, i.e. a non-negative Borel-measurable function such that (see (1.1)-(1.5) in \cite{DecPal1995} or Definition 1.3 in \cite{GarPonPri2006})
		\begin{enumerate}[(i)]
			\item $\phizero(x,\cdot)$ is positively $1$-homogeneous for every $x\in\bO$;
			\item $\phizero(x,\cdot)$ is convex for a.e. $x\in\bO$;
			\item for every compact subset $B$ of $\Omega$, there exist $\alpha_{B},M_{B}>0$, such that 
			\begin{equation*}
				\alpha_{B}|q|\le\phizero(x,q)\le M_{B}|q|, \ \quad \mbox{for all }(x,q)\in B\times \Rd.
			\end{equation*}
		\end{enumerate}
		The positive $1$-homogeneity and the convexity follows easily from the definition of $\phizero$. The measurability of $\phizero(\cdot,q)$ follows from the continuity of $\phizero(\cdot,q)$ for every $q\in\Rd$(see Proposition \ref{continuityphizero} below) and the measurability of $\varphi(x,\cdot)$ for every $x\in\bO$ is a direct consequence of the convexity, which in particular implies continuity. The last condition is satisfied because of the assumption \eqref{alphaM}. We finally remark that also the functional $\varphi$ satisfies the properties (i)-(iii), where (iii) is satisfied with $M^{-1},\alpha^{-1}$.
	\end{rmk}
	\begin{lmm}
		\label{phizeropuntuale}
		For every $x\in\bO$ and $q\in\Rd$ there exists $p=p(x,q)\in\partial K(x)$ such that 
		\begin{equation*}
			\phizero(x,q)=p\cdot q.
		\end{equation*}
		Moreover, if $K(x)$ is strictly convex $p=p(x,q)$ is unique. 
	\end{lmm}
	\begin{proof}
		The existence of $p=p(x,q)$ follows directly from the definition of $\phizero$ and the compactness of $K(x)$, while the fact that $p\in\partial K(x)$ is a consequence of the convexity.\\
		If $\partial K(x)$ is strictly convex, let us assume by contradiction that there exists $p_{1}\not =p_{2}$ such that $\phizero(x,q)=p_{1}\cdot q=p_{2}\cdot q$. Then, for all $t\in(0,1)$ we have
		\begin{equation*}
			(tp_{1}+(1-t)p_{2})\cdot q=tp_{1}\cdot q+(1-t)p_{2}\cdot q=\phizero(x,q),
		\end{equation*} 
		that means that $tp_{1}+(1-t)p_{2}$ belongs to $\partial K(x)$, contradicting the assumption of strictly convexity. 
	\end{proof}
	
	\begin{prp}
		\label{continuityphizero}
		$\phizero:\bO\times\Rd\to\R$ is continuous. Moreover if $K(x)$ is strictly convex for every $x\in \bO$, also the function $p:\bO\times\Rd\to\Rd$ such that $p(x,q)\cdot q =\phizero(x,q)$ is continuous.
	\end{prp}
	\begin{proof}
		Let $\left((x_{n},q_n)\right)$ converging to $(x,q)$. By Lemma \ref{phizeropuntuale}, there exists a sequence $(p_{n})$, such that 
		\begin{equation*}
			p_{n}\in \partial K(x_{n}) \quad \mbox{and} \quad \phizero(x_{n},q_n)=p_{n}\cdot q.
		\end{equation*}
		Since $K(x)\subset B(0,M)$ for all $x\in\bO$, up to the choice of a subsequence we can assume that there exists $p\in\Rd$ such that $p_{n}\to p$ when $n\to\infty$. Clearly we have that
		$p_{n}\cdot q_n\to p\cdot q$. We want to prove that 
		\begin{equation*}
			\phizero(x,q)=p\cdot q.
		\end{equation*}By continuity of $\varphi$ (Corollary \ref{fullcontinuityphi}) we infer that
		\begin{equation*}
			1=\lim_{n\to\infty}\varphi(x_{n},p_{n})=\varphi(x,p),
		\end{equation*} 
		which implies that $p\in K(x)$ and thus that $p\cdot q \le \phizero(x,q)$.
		We now claim that 
		\begin{equation*}
			p\cdot q \ge w\cdot q, \quad \mbox{for all }w\in \partial K(x).
		\end{equation*}
		Let $w\in \partial K(x)$, by convexity $tw\in K(x)$ for all $0\le t\le 1$, with $tw\in\interior K$, for $t<1$. Then by Proposition \ref{hausdorffdistance} there exists $\nu\in\mathbb{N}$, such that $tw\in K(x_{n})$ for all $n\ge \nu$. Therefore $\phizero(x_{n},q)=p_{n}\cdot q\ge tw\cdot q$ for all $n\ge \nu$, which implies $p\cdot q\ge tw\cdot q$. Letting $t$ to $1$, we get the claim.
		\\Finally if $K(x)$ is strictly convex for every $x$, by the uniqueness of $p=p(x,q)=\argmax \{w\cdot q \, : \, w\in K(x)\}$ proved in Lemma \ref{defphizero} before, we have the convergence of the whole $(p_n)$ to $p$ which implies continuity of $p(x,q)$.
	\end{proof}
	The following is result about which conditions on $\varphi$ insure regularity of $\phizero$. 
	
	\begin{prp}
		\label{differentiabilityphizero}
		If $K(x)$ is strictly convex for all $x\in\Omega$ and $\nabla_x\varphi(x,q)\in C(\Omega\times\Rd)$, then also $\nabla_x\phizero(x,q)\in C(\Omega\times\Rd) $.
	\end{prp}
	\begin{proof}
		We first want to prove that the limit
		\begin{equation}
			\label{diffquotient}
			\lim\limits_{h\to 0}\frac{\phizero(x+he_{i},q)-\phizero(x,q)}{h}
		\end{equation}
		exists and is finite for any $i=1,\dots,d$. We prove it for $i=1$ and we fix $\bar{h}=he_{1}$, in order to simplify the notation. By Lemma\eqref{phizeropuntuale}, there exist unique $p(x+\bar{h},q)=p(x+\bar{h})$ and $p(x,q)=p(x)$, such that
		\begin{align*}
			&\phizero(x+\bar{h},q)=\max\{p\cdot q \ | \ \varphi (x+\bar{h},p)=1 \}= p(x+\bar{h})\cdot q, 
			\\&\phizero(x,q)=\max\{p\cdot q \ | \ \varphi (x,p)=1 \}= p(x)\cdot q.
		\end{align*}
		The limit \eqref{diffquotient} can be rewritten as:
		\begin{equation}
			\label{diffquotient2}
			\lim_{h\to 0}\frac{p(x+\bar{h})\cdot q - p(x)\cdot q}{h}.
		\end{equation}
		Since $\varphi$ is continuously differentiable we can write 
		
		\[\begin{split}
			\varphi(x,p(x+\bar{h}))&=\varphi(x+\bar{h},p(x+\bar{h}))-\partial_{x_{1}}\varphi(x,p(x+\bar{h}))h+o(h)\\& =1-\partial_{x_{1}}\varphi(x,p(x+\bar{h}))h+o(h).\
		\end{split}\]
		
		and
		\[\begin{split}
			\varphi(x+\bar{h},p(x))&=\varphi(x,p(x))+\partial_{x_{1}}\varphi(x,p(x))h+o(h)\\&=1+\partial_{x_{1}}\varphi(x,p(x))h+o(h).
		\end{split}\]
		Which implies that
		\begin{align*}
			&\varphi\left(x,\frac{p(x+\bar{h})}{1-\partial_{x_{1}}\varphi(x,p(x+\bar{h}))h+o(h)}\right)=1 \quad \mbox{and}\\	&\varphi\left(x+\bar{h},\frac{p(x)}{1+\partial_{x_{1}}\varphi(x,p(x))h+o(h)}\right)=1.
		\end{align*}
		and therefore that
		\begin{align*}
			&p(x)\cdot q \ge \frac{p(x+\bar{h})}{1-\partial_{x_{1}}\varphi(x,p(x+\bar{h}))h+o(h)} \cdot q, \quad \mbox{and} \\&p(x+\bar{h})\cdot q \ge \frac{p(x)}{1+\partial_{x_{1}}\varphi(x,p(x))h+o(h)} \cdot q,
		\end{align*}
		Then the quotient in the limit \eqref{diffquotient2} can be estimated from above and below, by:
		\begin{align*}
			&\frac{p(x+\bar{h})\cdot q - p(x)\cdot q}{h}\le\frac{\left(1-\frac{1}{1-\partial_{x_{1}}\varphi(x,p(x+\bar{h}))h+o(h)}\right)p(x+\bar{h})\cdot q}{h}\\
			& = \frac{1}{1-\partial_{x_{1}}\varphi(x,p(x+\bar{h}))h+o(h)} \left(-\partial_{x_{1}}\varphi(x,p(x+\bar{h}))+\frac{o(h)}{h}\right) p(x+\bar{h})\cdot q 
		\end{align*}
		and
		\begin{align*}
			&\frac{p(x+\bar{h})\cdot q - p(x)\cdot q}{h}\ge \frac{\left(\frac{1}{1+\partial_{x_{1}}\varphi(x,p(x))h+o(h)}-1\right)p(x)\cdot q}{h}\\&=\frac{1}{1-\partial_{x_{1}}\varphi(x,p(x))h+o(h)}\left(-\partial_{x_{1}}\varphi(x,p(x))+\frac{o(h)}{h}\right) p(x)\cdot q.
		\end{align*}
		Letting $h\to 0$ in both inequalities, by continuity of $p(x)$ and $\partial_{x_{1}}\varphi$ we get
		\begin{equation*}
			\partial_{x_{1}}\phizero(x,q)=\lim_{h\to 0}\frac{p(x+\bar{h})\cdot q - p(x)\cdot q}{h}=-\partial_{x_{1}}\varphi\left(x,p(x)\right) p(x)\cdot q,
		\end{equation*}
		which shows also that $\partial_{x_{1}}\phizero$ is continuous.
	\end{proof}
	\begin{dfn}
		\label{finslerlength}
		Given the Finsler metric $\phizero$, we can define the associated \textit{Finslerian length functional} $L$:
		\begin{equation*}
			L(\gamma):=\int_{0}^{1}\phizero(\gamma(t),\dot{\gamma}(t))dt, 
		\end{equation*} 
		with $\gamma\in\Winf((0,1))\cap C([0,1])$.
	\end{dfn}
	\begin{rmk}
		\label{indipendenceparam}
		Thanks to the positively $1$-homogeneity of $\phizero$, we have
		\begin{equation*}
			L(\gamma)=\int_{0}^{1}\phizero(\gamma(t),\dot{\gamma}(t))dt=\int_{0}^{l(\gamma)}\phizero(\gamma(s),\dot{\gamma}(s))ds=\int_{a}^{b}\phizero(\gamma(\tau),\dot{{\gamma}}(\tau))d\tau,
		\end{equation*}
		for every orientation preserving parametrization and for every $a,b\in\R$, $a<b$. Therefore we do not lose generality if we restrict to the class of arch length parameterized curves in the definition of the pseudo-distance $d$ associated to $\phizero$. 
	\end{rmk}
	\begin{dfn}\label{distance}
		Let $x,y\in\Omega$, then we define
		\begin{equation}\label{distanceformula}
			d(x,y):=\inf\left\{\int_{0}^{l(\gamma)}\phizero(\gamma(s),\dot{\gamma}(s))ds \ : \ \gamma\in\mathsf{path}(x,y)\right\},
		\end{equation}
		where, if $l(\gamma)$ denotes the Euclidean length of $\gamma$, $\mathsf{path}(x,y)$ is the set
		\begin{equation*}
			\{\gamma\in \Winf((0,l(\gamma)), \Omega)\cap C([0,l(\gamma)],\Omega) \, : \, \gamma(0)=x, \ \gamma(l(\gamma))=y, |\dot{\gamma}(s)|=1 \ \mbox{a.e.}\}.
		\end{equation*}
		The above definition can be extended to all $x,y\in\bO$, in the following way:\\
		\begin{equation}
			\label{dboundary}
			d(x,y):=\inf\left\{\liminf_{n\to\infty}d(x_{n},y_{n}) \ : \ (x_{n}), (y_{n})\in\Omega^{\mathbb{N}} \ \mbox{and } \ x_{n}\to x, \ y_{n}\to y \right\}.
		\end{equation}
	\end{dfn}
	
	\begin{rmk}
		The pseudo-distance defined in Definition \ref{distance} is not a distance in general, since $K(x)$ does not need to be symmetric. However, from now on, we will refer to $d$ as ``distance".
	\end{rmk}
	\begin{rmk}
		\label{triangularineq}
		It is important to point out that, with our definition of $d$, 
		\begin{equation*}
			d(x,y)\le d(x,z)+d(z,y)
		\end{equation*}
		for every $x,y\in\bO$ and $z\in\Omega$, but, as shown in the Example \ref{notriang} below, it may fail if the interpolating point $z$ belongs to the boundary. However, as proved in Proposition \ref{continuityd} if we assume $\pO$ to be Lipschitz the triangular inequality holds for every $x,y,z\in \bO$.
	\end{rmk}
	\begin{xpl}
		\label{notriang}
		Let $\Omega\subset\R^{2}$, defined as $\Omega:=B_{1}(0)\setminus\{(0,y):0< y<1\}$ and  $K(x)\equiv B_{1}(0)$ for all $x\in \Omega$. Then $\phizero(x,q)=|q|$ for any $q\in\R^{d}$ and for any $x\in\Omega$. Let's take $(x_{1},x_{2})=\left(\frac{1}{2},\frac{1}{2}\right)$, $(y_{1},y_{2})=\left(-\frac{1}{2},\frac{1}{2}\right)$ and $(z_{1},z_{2})=\left(0,\frac{1}{2}\right)$ then 
		\begin{equation*}
			d((x_{1},x_{2}),(y_{1},y_{2}))=\inf\left\{\int_{0}^{1}|\dot{\gamma}(t)|dt: \gamma\in\mathsf{path}((x_{1},x_{2}),(y_{1},y_{2})) \right\}=\frac{\sqrt{2}}{2}+\frac{\sqrt{2}}{2}=\sqrt{2},
		\end{equation*} 
		while
		\begin{equation*}
			d((x_{1},x_{2}),(z_{1},z_{2}))+d((z_{1},z_{2}),(y_{1},y_{2}))=\frac{1}{2}+\frac{1}{2}=1.
		\end{equation*}
	\end{xpl}
	
	\begin{prp}
		\label{continuityd}
		The function distance $d:\bO\times\bO\to\R$ is lower semicontinuous, continuous in $\Omega\times\Omega$ and
		\begin{equation}
			\label{bilipdistance}
			\alpha |x-y|_{\Omega}\le d(x,y)\le M|x-y|_{\Omega}, \quad \mbox{for all }x,y\in\Omega, 
		\end{equation} 
		where
		\begin{equation*}
			|x-y|_{\Omega}:=\inf\left\{\int_{0}^{1}|\dot{\gamma}(t)|dt: \gamma\in\mathsf{path}(x,y) \right\},
		\end{equation*}
		Moreover if $\pO$ is Lipschitz, $d$ is continuous and equivalent to the Euclidean distance. In particular, the triangular inequality holds for every $x,y,z\in\bO$.
	\end{prp}
	\begin{proof}
		The proof of the \eqref{bilipdistance} follows from the property \eqref{alphaM} of $K$. Indeed, by that property we infer that 
		\begin{equation*}
			\alpha|q|\le\phizero(x,q)\le M|q|, \quad \mbox{for every }x\in\Omega,
		\end{equation*}
		which implies
		\begin{equation}\label{prebilipschitzdistance}
			\alpha \int_{0}^{l(\gamma)}|\dot{\gamma}(t)|dt \le \int_{0}^{l(\gamma)}\phizero (\gamma(t),\dot{\gamma}(t)) dt\le M \int_{0}^{l(\gamma)}|\dot{\gamma}(t)|dt, \quad \mbox{for every } \gamma\in\mathsf{path}(x,y),
		\end{equation}
		that proves the \eqref{bilipdistance}.
		Clearly the \eqref{bilipdistance} implies the continuity of $d$ in $\Omega\times\Omega$. The lower semicontinuity in $\bO\times\bO$ follows from the \eqref{dboundary}, the extended definition of $d$ up to the boudary. Finally, if $\pO $ is lipschitz, there exists $C_{\Omega}>0$, such that 
		\begin{equation}
			\label{geoeucdistance}
			|x-y|\le |x-y|_{\Omega}\le C_{\Omega}|x-y|.
		\end{equation}
		The \eqref{geoeucdistance} allows us to extend $d$ continuously to the boundary, defining, for every $x,y\in\pO$:
		\begin{equation*}
			d(x,y):=\lim_{n\to\infty}d(x_{n},y_{n}), \quad \mbox{for some }x_{n}\to x, y_{n}\to y.
		\end{equation*}
		This definition is well posed and equivalent to the one given by \eqref{dboundary}, indeed, if $(x_{n}),(x'_{n})\subset\Omega$ are two sequences converging to $x$ and $(y_{n}),(y'_{n})\subset\Omega$ are two sequences converging to $y$, by using the triangular inequality for points in $\Omega$ it holds
		\begin{align*}
			&\left|d(x_{n},y_{n})-d(x'_{n},y'_{n})\right|=\left|d(x_{n},y_{n})-d(x'_{n},y_{n})+d(x'_{n},y_{n})-d(x'_{n},y'_{n})\right|\le\\ & \le \left|d(x_{n},x'_{n})+d(y'_{n},y_{n})\right|\le C_{\Omega}|x_{n}-x'_{n}|+C_{\Omega}|y_{n}-y'_{n}|\to 0.
		\end{align*}
	\end{proof}
	Keeping in mind the definition of $\mathsf{path}$, for all $x,y\in\bO$ we define the set $\pathbar(x,y)$, as
	\begin{align*}
		\{\gamma\in \Winf((0,l(\gamma)), \bO)\cap C([0,l(\gamma)],\bO) \ : \ 
		\gamma(0)=x, \ \gamma(l(\gamma))=y, |\dot{\gamma}(s)|=1 \ \mbox{a.e.}\}.
	\end{align*}
	
	\begin{prp}
		\label{existencecurve}
		Let $x,y\in \Omega$. Then there exists $\gamma \in \overline{\mathsf{path}}(x,y)$, 
		such that 
		\begin{equation*}
			d(x,y)=\int_{0}^{l(\gamma)}\phizero(\gamma(s), \dot{\gamma}(s))ds.
		\end{equation*}
		We say that such a $\gamma$ is a geodesic for $d$ connecting $x$ to $y$.
	\end{prp}
	\begin{proof} 
		We first observe that since $\Omega$ is a bounded and connect set, $|x-y|_{\Omega}<+\infty$ and by \eqref{bilipdistance} we also have $d(x,y)<+\infty$.
		We then consider a sequence $(\gamma_{n})_{n\in\mathbb{N}}\subset\mathsf{path}(x,y)$ converging to the infimum, i.e. such that
		\begin{equation}
			\lim_{n\to\infty}\int_{0}^{l(\gamma_{n})}\phizero(\gamma_{n}(s),\dot{\gamma_{n}}(s))ds=d(x,y).
		\end{equation}
		Thanks to the \eqref{prebilipschitzdistance} can also assume that $|l(\gamma_{n})|\le L$, for all $n$, for some $L\ge 0$. Then there exists a subsequence of $(\gamma_{n})$, that we will still call $(\gamma_{n})$, such that
		\begin{equation}
			\label{limitlengths}
			\lim_{n\to\infty}l(\gamma_{n})=\bar{l}.
		\end{equation}
		Moreover it holds
		\begin{itemize}
			\label{boundness}\item $|\gamma_{n}(s)|\le C$, for some $C\ge 0$, since $\Omega$ is bounded;
			\label{equicont}\item $|\dot{\gamma_{n}}(s)|=1$ a.e. in $[0,l(\gamma_{n})]$, by definition of $\mathsf{path}(x,y)$.
		\end{itemize}
		For every $\eps>0$, by Arzelà-Ascoli Theorem, there exists a subsequence converging uniformly to some curve $\gamma^{\eps}$ in $[0,\bar{l}-\eps]$. By a diagonal argument, we can again extract a subsequence, that we still call $(\gamma_{n})$, and a curve $\gamma:[0,\bar{l}]\to\bar{\Omega}$ such that 
		\begin{align}
			\notag&\gamma_{n} \ \mbox{converges punctually to }\gamma \ \mbox{in } [0,\bar{l}), \\ \notag&\lim_{n\to\infty}l(\gamma_{n})=\bar{l}, \\
			\label{norminfinitygamma}& |\dot{\gamma}(s)|=1 \ \mbox{a.e. in } [0,\bar{l})
		\end{align}
		Moreover, by Lebesgue Theorem we have convergence in $L^{2}((0,\bar{l}))$ and then, by the stability of weak derivatives, we have that $\dot{\gamma_{n}}\rightharpoonup \dot{\gamma}$ in $L^{2}((0,\bar{l}))$.
		By \eqref{norminfinitygamma} we infer that $\lim_{n\to\infty}||\dot{\gamma}_{n}||_{L^2}=1=||\dot{\gamma}||_{L^2}$ and thus $\dot{\gamma_{n}}\rightarrow\dot{\gamma}$ in $L^{2}((0,\bar{l}),\bar{\Omega})$ and a.e. in $[0,\bar{l})$. Continuity of $\phizero$ (see Proposition \ref{continuityphizero}) then implies that $\phizero(\gamma_{n}(s), \dot{\gamma_{n}}(s))$ converges a.e. to $\phizero(\gamma(s),\dot\gamma(s))$ and, according to the Lebesgue's Theorem:
		\begin{align}
			\notag&d(x,y)=\lim_{n\to\infty}\int_{0}^{l(\gamma_{n})}\phizero(\gamma_{n}(s),\dot{\gamma_{n}}(s))ds=
			\label{lebesgue}\\&=\int_{0}^{\bar{l}}\phizero(\gamma(s), \dot{\gamma}(s))ds.
		\end{align} 
		Finally we prove that
		\begin{equation*}
			\lim_{\eps\to 0}\gamma(\bar{l}-\eps)=y.
		\end{equation*}
		Let $\eps>0$, then we choose $n$ big enough so that $|l(\gamma_{n})-\bar{l}|<\eps$ and $\left|\gamma(\bar{l}-\eps)-\gamma_{n}(\bar{l}-\eps)\right|<\eps$, then
		\begin{equation*}
			\left|\gamma(\bar{l}-\eps)-y\right|\le\left|\gamma(\bar{l}-\eps)-\gamma_{n}(\bar{l}-\eps)\right|+\left|\gamma_{n}(\bar{l}-\eps)-\gamma_{n}(l(\gamma_{n}))\right|<\eps +|\bar{l}-\eps-l(\gamma_{n})|<3\eps.
		\end{equation*}
		Then, defining $\gamma(\bar{l}):=y$, we have that $\gamma \in\overline{\mathsf{path}}(x,y)$.
	\end{proof}
	\begin{prp}
		\label{existencecurvebondary}
		Let $x,y\in \bO$ such that $d(x,y)<+\infty$. Then there exists $\gamma \in \overline{\mathsf{path}}(x,y)$, such that 
		\begin{equation*}
			d(x,y)=\int_{0}^{l(\gamma)}\phizero(\gamma(s), \dot{\gamma}(s))ds.
		\end{equation*}
		We say that such a $\gamma$ is a geodesic for $d$ connecting $x$ to $y$.
	\end{prp}
	\begin{proof}
		If $x,y\in \Omega$, then the existence of such a curve has already been proven in Proposition \ref{existencecurve}. Let $x,y\in\pO$ such that $d(x,y)<+\infty$ and let $(x_{n}),(y_{n})$ two sequences (they can be constructed for instance by a diagonal argument) converging respectively to $x$ and $y$ such that
		\begin{equation*}
			d(x,y)=\lim_{n\to+\infty}d(x_{n},y_{n}).
		\end{equation*}
		For every $n$ we consider $\gamma_{n}\in\overline{\mathsf{path}}(x_{n},y_{n})$ such that 
		\begin{equation*}
			d(x_{n},y_{n})=\int_{0}^{l(\gamma_{n})}\phizero(\gamma_{n}(s),\dot\gamma_{n}(s))ds,
		\end{equation*}
		obtaining that
		\begin{equation}
			\label{distboundary2}
			d(x,y)=\lim_{n\to\infty}\int_{0}^{l(\gamma_{n})}\phizero(\gamma_{n}(s),\dot{\gamma}_{n}(s))ds.
		\end{equation}
		Finally we apply the same reasoning as in the proof of Proposition \ref{existencecurve} and, up to the choice of subsequences, we get a curve $\gamma\in\overline{\mathsf{path}}(x,y)$, such that 
		\begin{equation*}
			\lim_{n\to\infty}\int_{0}^{l(\gamma_n)}\phizero(\gamma_{n}(s),\dot\gamma_{n}(s))ds=\int_{0}^{l(\gamma)}\phizero(\gamma(s),\dot\gamma(s))ds.
		\end{equation*}
	\end{proof}
	
	\begin{rmk}
		\label{gammainclosurepath}
		From Proposition \ref{existencecurve} and Proposition \ref{existencecurvebondary} we infer that, for every $x,y\in\bO$ such that $d(x,y)<+\infty$, the optimal $\gamma\in\pathbar(x,y)$
		\begin{equation*}
			d(x,y)=\inf \left\{\int_{0}^{\l(\gamma)}\phizero(\gamma(s),\dot{\gamma}(s))ds \ : \gamma\in{\mathsf{path}(x,y)}   \right\},
		\end{equation*}
		
		We point out that, as shown in the example below, it can be 
		\[d(x,y)>\inf \left\{\int_{0}^{\l(\gamma)}\phizero(\gamma(s),\dot{\gamma}(s))ds \ : \gamma\in{\overline{\mathsf{path}}(x,y)}   \right\}. \]
	\end{rmk}
	\begin{xpl}
		\label{exnotlip}
		Let  $\Omega$, $K(x)$, $(x_{1},x_{2})$ and $(y_{1},y_{2})$ be defined as in the Example \ref{notriang}. Then 
		\begin{equation*}
			d((x_{1},x_{2}),(y_{1},y_{2}))=\inf\left\{\int_{0}^{l(\gamma)}|\dot{\gamma}(s)|ds: \gamma\in\mathsf{path}((x_{1},x_{2}),(y_{1},y_{2})) \right\}=\frac{\sqrt{2}}{2}+\frac{\sqrt{2}}{2}=\sqrt{2},
		\end{equation*} 
		while
		\begin{equation*}
			\inf\left\{\int_{0}^{l(\gamma)}|\dot{\gamma}(s)|ds:\gamma\in\overline{\mathsf{path}}((x_{1},x_{2}),(y_{1},y_{2}))\right\}=1.
		\end{equation*}
	\end{xpl}

	\begin{prp}
		\label{omegalipschitz}
		If $\Omega$ has a Lipschitz boundary then, for every $x,y\in\bO$, $d(x,y)<+\infty$  and
		\begin{equation}
			\label{distanceinastarshaped}
			d(x,y)=\inf\left\{\int_{0}^{l(\gamma)}\phizero(\gamma(s),\dot\gamma(s))ds\ : \ \gamma\in\overline{\mathsf{path}}(x,y) \right\}.
		\end{equation}
	\end{prp}
	\begin{proof}
		
		The first part of the statement is a consequence of the second part of Proposition \ref{continuityd} while the second part follows directly by Remark \ref{gammainclosurepath} and Lemma. \ref{approxlipsets}.
	\end{proof}

	\section{The maximal and minimal extension and some structure results }
	\label{section3}
	From now on we will assume $\partial\Omega$ to be lipschitz. \footnote{All the results present in this section can be proven also for a more general bounded and connected open set $\Omega$ with some cautions when we consider points on the boundary. However this is beyond the scope of this paper and we prefer to lighten the proofs, considering a more simple setting.}We also assume that the boundary datum $g$ of \eqref{problem} satisfies
	\begin{equation*}
		g(y_{2})-g(y_{1})\le d(y_{1},y_{2}) \quad \mbox{for all } y_{1},y_{2}\in\pO.
	\end{equation*} 
	We will say (even if is not exactly the standard definition) that $g$ is $1$-lipschitz w.r.t. $d$.
	In this section we consider the maximal and minimal extensions defined respectively by 
	\begin{align}
		\label{S+}&S^{-}(x)=\sup\left\{g(y)-d(x,y): y\in\pO\right\},\\
		\label {S-}&S^{+}(x)=\inf\left\{g(y)+d(y,x): y\in\pO\right\},
	\end{align}
	\begin{prp}
		The following facts hold true:
		\label{maxminextensioncurves}
		\begin{enumerate}
			\item $S^{+}(y)=S^{-}(y)=g(y)$ for every $y\in\pO$;
			\item for every $x\in\bO$ there exist $y_{1},y_{2}\in\pO$ and $\gamma_{1}\in\overline{\mathsf{path}}(y_{1},x),\gamma_{2}\in\overline{\mathsf{path}}(x,y_{2})$ with
			\begin{equation*}
				S^{+}(x)=g(y_1)+\int_{0}^{l(\gamma_1)}\phizero(\gamma_1(s),\dot{\gamma}_1(s))ds,
			\end{equation*}
			such that $\gamma_1(s)\in \Omega$ for every $s\in(0,\ell(\gamma_1)]$ and 
			\begin{equation*}
				S^{-}(x)=g(y_2)-\int_{0}^{l(\gamma_2)}\phizero(\gamma_2(s),\dot{\gamma}_2(s))ds,
			\end{equation*}
			such that $\gamma_2(s)\in\Omega$ for every $s\in(0,\ell(\gamma_2)]$. We will refer to such $y_1,\gamma_1$ and $y_2,\gamma_2$ as optimal respectively for $S^+(x)$ and $S^-(x)$;
			\item if $y_1\in\pO$ and $\gamma_1\in\pathbar(y_1,x)$ are optimal for $S^+(x)$, then for every $\hat{s}\in(0,l(\gamma_1))$, with $z_1=\gamma_1(\hat{s})$, we have 
			\begin{equation}
				\label{S+z}
				S^{+}(z)=g(y_{1})+\int_{0}^{\hat{s}}\phizero(\gamma_{1}(s),\dot\gamma_{1}(s))ds,
			\end{equation}
			and if $y_2\in\pO$ and $\gamma_2\in\pathbar(y_2,x)$ are optimal for $S^-(x)$, then for every $\hat{s}\in(0,l(\gamma_2))$, with $z_2=\gamma_2(\hat{s})$,
			\begin{equation}
				\label{S-z}
				S^{-}(z)=g(y_{2})+\int_{0}^{\hat{s}}\phizero(\gamma_{2}(s),\dot\gamma_{2}(s))ds.
			\end{equation}
		\end{enumerate}
		
	\end{prp}
	
	\begin{proof}
		The proof of (1) follows directly from the definition of $S^+$ and $S^-$ and by $g$ being $1$-lipschitz.\\
		We proceed with the proof of (2).
		We first claim that for any $x\in\bO$ there exist $y\in\pO$ and $\gamma\in\pathbar(x,y)$, such that 
		\begin{equation}\label{optimalgamma}
			S^{+}(x)=g(y)+\int_{0}^{l(\gamma)}\phizero(\gamma(s),\dot{\gamma}(s))ds.
		\end{equation} 
		If $x\in\pO$ there is nothing to prove because of the point (1). 
		If $x\in \Omega$, since $g$ is continuous, $d$ is lower semicontinuous (see Proposition \ref{continuityd}) and $\bO$ is a compact set, there exists $y\in\pO$ such that 
		\begin{equation*}
			S^{+}(x)=g(y)+d(y,x).
		\end{equation*}
		Now, the existence of an optimal curve is given by Proposition \ref{existencecurvebondary} and \eqref{optimalgamma} is proven.
		We then show that it is possible to find a curve that solves \eqref{optimalgamma} all contained in $\Omega$ except from the first point.
		Let $\hat{y}=\gamma (\hat{s})$ for some $\hat{s}\in(0,1]$, be the last point of the boundary touched by $\gamma$, then $\gamma:[\hat{s},l(\gamma)]\to\bO$ touches the boundary only in $\gamma(\hat{s})$. We show that 
		\begin{equation*}
			g(y)+\int_{0}^{l(\gamma)}\phizero(\gamma(s),\dot{\gamma}(s))ds=g(\hat{y})+\int_{\hat{s}}^{l(\gamma)}\phizero(\gamma(s),\dot{{\gamma}}(s))ds.
		\end{equation*}
		By definition of $S^{+}$ we know that ``$\le$" holds.
		Let us assume by contradiction that the inequality is strict. Then 
		\begin{equation}
			\label{contradiction}
			d(y,\hat{y})\le\int_{0}^{\hat{s}}\phizero(\gamma(s),\dot{\gamma}(s))ds<g(\hat{y})-g(y),
		\end{equation}
		which is in contradiction with the $1$-lipschitzianity of $g$ w.r.t. $d$.\\
		We conclude showing (3). Let $y\in\pO$ and $\gamma$ be optimal for the definition of $S^+(x)$ and let $z=\gamma(\hat{s})$, for some $\hat{s}\in(0,l(\gamma))$.
		We prove that
		\begin{equation*}
			S^{+}(z)=g(y_{1})+\int_{0}^{\hat{s}}\phizero(\gamma(s),\dot{\gamma}(s))ds=g(y)+d(y,z).
		\end{equation*}
		Assume by contradiction that there exist $\hat{y}\in\pO$ and $\eta\in\overline{\mathsf{path}}(\hat{y},z)$ such that
		\begin{equation}
			\label{contradictioncurve}
			g(\hat{y})+\int_{0}^{l(\eta)}\phizero(\eta(s),\dot\eta(s))ds<g(y)+\int_{0}^{\hat{s}}\phizero(\gamma(s),\dot\gamma(s))ds.
		\end{equation}
		Then if we call $\hat{\gamma}$ the piece of $\gamma$ connecting $z$ to $x$ and $\hat{\eta }$ the reparametrized version of the curve obtained by gluing  $\eta$ and $\hat{\gamma}$, we have
		\begin{equation*}
			g(\hat{y})+\int_{0}^{l(\hat{\eta})}\phizero(\hat{\eta}(s),\dot\hat{\eta}(s))ds<g(y)+\int_{0}^{l(\gamma)}\phizero(\gamma(s),\dot{\gamma}(s))ds, 
		\end{equation*}
		contradicting the optimality of $y$ and $\gamma$.\\
		The proof of (2) and (3) for $S^-$ is analogous. 
		
	\end{proof} 
	\begin{prp}
		\label{S+S-lipschitz}
		The maximal and minimal extensions, defined by \eqref{S+} and \eqref{S-} are $1$-Lipschitz w.r.t. to $d$, i.e.
		\begin{equation*}
			\Sp(y)-\Sp(x)\le d(x,y) \ \mbox{and }
			\Sm(y)-\Sm(x)\le d(x,y) \quad \mbox{for all } x,y\in\bO.
		\end{equation*} 
		Moreover  $\Sp, \Sm\in\Winf(\Omega)\cap C(\bO)$.
	\end{prp}
	\begin{proof}
		Let $x_1\in \partial\Omega$ such that $S^+(x)=g(x_1)+d(x_1,x)$ then for every $x,y\in\bO$,
		\begin{equation*}
			\Sp(y)-\Sp(x)\le g(x_{1})+d(x_{1},y)-g(x_{1})-d(x_{1},x)\le d(x,y),
		\end{equation*}
		where the last inequality follows from the triangular inequality. Moreover since $\pO$ is lipschitz, by Proposition \ref{continuityd}, we have that for every $x,y\in\bO$
		\begin{equation*}
			|\Sp(y)-\Sp(x)|\le \max\{d(x,y),d(y,x)\}\le C_{\Omega}|x-y|,
		\end{equation*}
		for some constant $C_{\Omega}$ depending on $\Omega$.
	\end{proof}
	
	\begin{prp}
		\label{1lipimpliessolution}
		Let $u:\bO\to\R$ be such that 
		\begin{equation*}
			u(y)-u(x)\le d(x,y) \quad \mbox{for all }x,y\in\bO.
		\end{equation*}
		Then $u\in W^{1,\infty}(\Omega)\cap C(\bO)$ and $\gradu(x)\in K(x)$ for a.e. $x\in\Omega$ (in particular $\nabla u(x)\in K(x)$ for any point $x$ at which $u$ is differentiable ).
	\end{prp}
	\begin{proof}
		For proving that  of $u\in W^{1,\infty}(\Omega)\cap(\bO)$, one can apply the same reasoning of the one used for $S^+$ in the proof of Proposition \ref{S+S-lipschitz} above.\\
		By Proposition \ref{dualitysuppfunc} we know that
		\begin{equation}
			\label{convexduality}
			K(x)=\{p\in\Rd \, : \, p\cdot q \le 1, \ \text{for all} \ q \ \text{s.t.} \ \phizero(x,q)\le 1 \}.
		\end{equation}	
		We will prove that $\nabla u(x)$ is contained the set defined in the right hand side of the equality \eqref{convexduality} for a.e. $x\in\Omega$.
		Let $x$ be a point of differentiability for $u$ and $q\in\Rd$ such that $\phizero(x,q)\le 1$. Then 
		\begin{equation*}
			\gradu(x)\cdot q=\lim_{h\to 0}\frac{u(x+hq)-u(x)}{h}\le\lim_{h\to 0} \frac{d(x,x+hq)}{h},
		\end{equation*}
		where the inequality holds by assumption. By definition of $d$ and $1$-homogeneity of $\phizero$, we get 
		\begin{equation}
			\gradu(x)\cdot q\le \lim \frac{1}{h}\int_{0}^{1}\phizero(x+thq,hq)=\phizero(x,q)\le 1.
		\end{equation}
		By \eqref{convexduality}, we have $\gradu(x)\in K(x)$.
	\end{proof}
	\begin{crl}
		\label{S+S-gradient}
		$S^{+},S^{-}$ are solutions of \eqref{problem}.
	\end{crl}
	\begin{proof}
		The proof follows directly from Proposition \ref{maxminextensioncurves} (i), Proposition \ref{S+S-lipschitz} and Proposition \ref{1lipimpliessolution}.
	\end{proof}
	
	\begin{prp}
		\label{solutions}
		If $u$ is a solution of \eqref{problem}, then
		\begin{equation*}
			S^{-}(x)\le u(x) \le S^{+}(x), \quad \text{for every }x\in\bO.
		\end{equation*}
	\end{prp}
	\begin{proof}
		Let us prove that $u(x)\le S^{+}(x)$ for all $x\in\Omega$. The case $S^{-}(x)\le u(x)$ is similar. If we knew that $u$ is $1$-lipschitz w.r.t. $d$, then the thesis would be straightforward. Indeed we would have
		\begin{equation*}
			u(x)-g(y)=u(x)-u(y)\le d(y,x) \quad \mbox{for every } y\in\pO,
		\end{equation*}
		that implies $u(x)\le\inf \{g(y)+d(y,x)\}$.\\
		The following proposition will show that being a solution of \eqref{problem} implies $1$-lipschitzianity w.r.t. to $d$.
	\end{proof}
	\begin{prp}
		\label{1lipu}
		Let $u\in\Winf(\Omega)\cap C(\bO)$ such that $\nabla u(x)\in K(x)$ a.e. in $\Omega$, then
		\begin{equation*}
			u(y)-u(x)\le d(x,y) \quad \mbox{for all } x,y\in\bO.
		\end{equation*}  
	\end{prp}
	
	\begin{proof}
		
		Let $x,y\in\bO$ and $N:=\{x\in\Omega \, : \, u \ \mbox{is not differentiable at }x \ \mbox{or }\nabla u(x)\not\in K(x) \}$. It then holds that 
		\begin{equation*}
			u(y)-u(x)\le\inf\left\{\int_{0}^{l(\gamma)}\phizero(\gamma(s),\dot{\gamma}(s))ds:\gamma\in\mathsf{path}(x,y) \ \mbox{and} \ \gamma \ \mbox{transversal to} \ N \right\},
		\end{equation*}
		where transversal means that $\mathcal{H}^{1}(\gamma((0,1))\cap N)=0$. Indeed, for any $\gamma$ transversal to $N$, we have 
		\begin{equation*}
			u(y)-u(x)=\int_{0}^{l(\gamma)}(u\circ\gamma)'ds=\int_{0}^{l(\gamma)}\nabla u(\gamma(s))\cdot\dot{\gamma}(s)ds\le\int_{0}^{l(\gamma)}\phizero(\gamma(s),\dot{\gamma}(s))ds,
		\end{equation*}
		where the inequality follows from the definition of $\phizero$, since $\nabla u(\gamma(s))\in K(\gamma(s))$. The proof is concluded applying Lemma \ref{approximationbytrasversalcurves} in the Appendix, thanks to which we know that is possible to approximate any curve $\gamma\in\mathsf{path}(x,y)$ with a sequence of $(\gamma_n)\in\mathsf{path}(x,y)$ transversal to $N$ for any $n$.
		
	\end{proof}
	\begin{rmk}\label{lipurmk}
		As a direct consequence of the Proposition \ref{1lipu} above we have that, if $u$ is a solution of \eqref{problem}, then $g$ is $1$-lipschitz w.r.t. $d$.
	\end{rmk}
	\begin{rmk}
		\label{equivalenced}
		Another consequence of the results presented above is that $$d(x,y)=\delta_{\phizero}(x,y)$$ for every $x,y\in\bO$, where
		\begin{align}
			\label{dphi}
			\notag \delta_{\phizero}(x,y)&=\sup\left\{u(y)-u(x): u\in\Winf(\Omega)\cap C(\bO), \ \esup_{x\in\Omega}\varphi^{00}(x,\nabla u(x))\le 1 \right\}\\
			&=\sup\left\{u(y)-u(x): u\in\Winf(\Omega)\cap C(\bO), \ \esup_{x\in\Omega}\varphi(x,\nabla u(x))\le 1 \right\},
		\end{align}
		which is a natural way to define a distance starting from a Finsler metric (see \cite{DecPal1995}, \cite{GarPonPri2006}).
	\end{rmk}
	\begin{proof}
		The inequality $``\ge"$ is trivial using Lemma \ref{1lipu} and the definition of $\varphi$.\\
		To recover the converse inequality we consider $u(\cdot)=d(z,\cdot)$, which is $1-$lipschitz thanks to the triangular inequality. 
	\end{proof}
	\begin{thm}
		\label{curveofcoincidence}
		Let $x\in \Omega$ such that $S^{-}(x)=S^{+}(x)$. Then for every $y_{1},y_{2}\in\pO$ and $\gamma_{1}\in\overline{\mathsf{path}}(y_{1},x),\gamma_{2}\in\overline{\mathsf{path}}(x,y_{2})$ that are optimal for $S^{+}(x)$ and $S^{-}(x)$ in the sense of Proposition \ref{maxminextensioncurves}, if one defines the curve
		\begin{equation}\label{geodesicgamma}
			\gamma(t):= \begin{cases}
				\gamma_{1}(t) \ &\mbox{for }  0\le t \le l(\gamma_1)\\
				\gamma_{2}(t-l(\gamma_{1})) \ &\mbox{for } l(\gamma_1)< t\le l(\gamma_2)+l(\gamma_1)
			\end{cases},
		\end{equation}
		it holds that $\gamma\in\overline{\mathsf{path}}(y_{1},y_{2})$ and $\gamma$ is a geodesic for $d(y_1,y_2)$.
		\\ For every $z\in\gamma$ and for every solution $u$ of \eqref{problem},  it also holds
		\begin{equation}
			\label{s+eqs-z}
			S^{-}(z)=u(z)=S^{+}(z).
		\end{equation}
		Finally, $S^{+},S^{-}$ and every solution $u$ of \eqref{problem} are derivable along $\gamma$ for $\mathcal{H}^{1}$ a.e. point of $\gamma$, and $\nabla_{\gamma} S^{+}=\nabla_{\gamma} S^{-}=\nabla_{\gamma} u=\phizero(\gamma,\dot{\gamma})$.
	\end{thm}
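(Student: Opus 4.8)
The plan is to exploit the representation results from Prop. \ref{maxminextensioncurves} together with the ordering $S^{-}\le u\le S^{+}$ from Prop. \ref{solutions}. The starting point is the hypothesis $S^{-}(x)=S^{+}(x)$ at the single point $x$, and I would first show this forces coincidence all along the concatenated curve $\gamma$. By Prop. \ref{maxminextensioncurves}(2) we have
\begin{align*}
S^{+}(x)&=g(y_1)+\int_{0}^{\ell(\gamma_1)}\phizero(\gamma_1(s),\gammap_1(s))\,ds,\\
S^{-}(x)&=g(y_2)-\int_{0}^{\ell(\gamma_2)}\phizero(\gamma_2(s),\gammap_2(s))\,ds,
\end{align*}
so that adding these and using $S^{+}(x)=S^{-}(x)$ yields
\[
g(y_2)-g(y_1)=\int_{0}^{\ell(\gamma_1)}\phizero(\gamma_1,\gammap_1)\,ds+\int_{0}^{\ell(\gamma_2)}\phizero(\gamma_2,\gammap_2)\,ds=\int_{0}^{\ell(\gamma)}\phizero(\gamma,\gammap)\,ds.
\]
Since $g$ is $1$-Lipschitz w.r.t. $d$, the left side is at most $d(y_1,y_2)$, while the right side is at least $d(y_1,y_2)$ (as $\gamma\in\pathbar(y_1,y_2)$); hence both are equal and $\gamma$ is a geodesic for $d(y_1,y_2)$. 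This proves the first assertion.

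\textbf{Coincidence along $\gamma$.} For any $z=\gamma(t)$ I would split $\gamma$ into the piece from $y_1$ to $z$ and the piece from $z$ to $y_2$. The subadditivity of the length integral combined with the geodesic equality just obtained forces both pieces to be optimal, i.e. $d(y_1,z)=\int_0^{t}\phizero(\gamma,\gammap)\,ds$ and $d(z,y_2)=\int_t^{\ell(\gamma)}\phizero(\gamma,\gammap)\,ds$, with $d(y_1,z)+d(z,y_2)=d(y_1,y_2)$. Then by Prop. \ref{maxminextensioncurves}(3) applied to the restricted optimal curves,
\[
S^{+}(z)=g(y_1)+d(y_1,z),\qquad S^{-}(z)=g(y_2)-d(z,y_2),
\]
and subtracting, using $g(y_2)-g(y_1)=d(y_1,y_2)=d(y_1,z)+d(z,y_2)$, gives $S^{+}(z)=S^{-}(z)$. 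Since every solution $u$ of \eqref{problem} satisfies $S^{-}\le u\le S^{+}$ by Prop. \ref{solutions}, we immediately obtain $S^{-}(z)=u(z)=S^{+}(z)$, which is \eqref{s+eqs-z}.

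\textbf{Derivability and curve-derivative.} For the final claim I would reparametrize $\gamma$ by arc length so that $|\gammap|=1$ a.e. (legitimate by Remark \ref{indipendenceparam}). By Prop. \ref{1lipu} every solution $u$ is $1$-Lipschitz w.r.t. $d$, hence along $\gamma$ we have, for $s_1<s_2$,
\[
u(\gamma(s_2))-u(\gamma(s_1))\le d(\gamma(s_1),\gamma(s_2))\le \int_{s_1}^{s_2}\phizero(\gamma,\gammap)\,ds.
\]
On the other hand, from the coincidence $u=S^{+}=g(y_1)+\int_0^{s}\phizero(\gamma,\gammap)$ along $\gamma$ (using \eqref{S+z}), the composition $s\mapsto u(\gamma(s))$ equals $g(y_1)+\int_0^{s}\phizero(\gamma(\sigma),\gammap(\sigma))\,d\sigma$, which is an absolutely continuous function of $s$ with derivative $\phizero(\gamma(s),\gammap(s))$ for a.e. $s$ (Lebesgue differentiation of the integral, valid $\mathcal{H}^1$-a.e. on the curve). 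Thus the curve derivative $\nabla_{\gamma}u=\phizero(\gamma,\gammap)$ exists $\mathcal{H}^1$-a.e., and the same identity for $S^{+}$ and $S^{-}$ follows since they coincide with $u$ along $\gamma$.

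\textbf{Main obstacle.} The delicate point is the bookkeeping showing that optimality of the whole geodesic $\gamma$ forces optimality of each sub-arc and hence the additivity $d(y_1,z)+d(z,y_2)=d(y_1,y_2)$; this requires care because the triangle inequality through a boundary-interior-boundary chain is only guaranteed under the Lipschitz-boundary hypothesis (Remark \ref{triangularineq}, Prop. \ref{continuityd}), and one must ensure the interior point $z=\gamma(t)$ genuinely lies in $\Omega$ for $t\in(0,\ell(\gamma))$ so that the interpolation through $z$ is admissible. Once additivity is secured, the remaining steps are routine applications of Prop. \ref{maxminextensioncurves}(3), Prop. \ref{solutions} and the Lebesgue differentiation theorem.
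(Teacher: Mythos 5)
Your overall architecture is the same as the paper's: part one (adding the two representations, sandwiching between the $1$-Lipschitz bound on $g$ and the definition of $d$) and part three (Lebesgue differentiation of $s\mapsto g(y_1)+\int_0^s\phizero(\gamma,\gammap)$ along the curve) are correct as written. The gap is in the coincidence step. For $z\in\gamma_1$, Prop.~\ref{maxminextensioncurves}(3) does give $S^{+}(z)=g(y_1)+d(y_1,z)$, but it does \emph{not} give $S^{-}(z)=g(y_2)-d(z,y_2)$: the hypothesis of that proposition is that $z$ lies on a curve \emph{optimal for $S^{-}$}, which means both that the boundary point attains the supremum in the definition of $S^{-}$ and that the curve is a geodesic. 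Your sub-arc argument only yields the second property (the restriction of $\gamma$ from $z$ to $y_2$ realizes $d(z,y_2)$); the statement that $y_2$ attains the supremum defining $S^{-}(z)$ is exactly the coincidence you are trying to prove, so invoking Prop.~\ref{maxminextensioncurves}(3) for it is circular. Symmetrically, for $z\in\gamma_2$ the unjustified equality is the one for $S^{+}(z)$.

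The repair is short and preserves your cancellation. Replace the unjustified equality by the inequality that holds by definition, $S^{+}(z)\le g(y_1)+d(y_1,z)$ (infimum) and $S^{-}(z)\ge g(y_2)-d(z,y_2)$ (supremum), and use $S^{-}\le S^{+}$ (which follows from Cor.~\ref{S+S-gradient} together with Prop.~\ref{solutions}). Then your additivity $d(y_1,z)+d(z,y_2)=d(y_1,y_2)$ and the identity $g(y_2)-g(y_1)=d(y_1,y_2)$ from part one give
\begin{equation*}
0\le S^{+}(z)-S^{-}(z)\le \bigl(g(y_1)+d(y_1,z)\bigr)-\bigl(g(y_2)-d(z,y_2)\bigr)=\bigl(d(y_1,z)+d(z,y_2)\bigr)-\bigl(g(y_2)-g(y_1)\bigr)=0,
\end{equation*}
so both representations hold a posteriori as equalities and the rest of your proof goes through. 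For comparison, the paper handles this step by a different chain of inequalities: it takes $z_1\in\gamma_1$, $z_2\in\gamma_2$ and sandwiches $d(z_1,z_2)$ using the $1$-Lipschitzianity of $S^{\pm}$ w.r.t.\ $d$ (Prop.~\ref{S+S-lipschitz}), $S^{-}\le S^{+}$, and the triangle inequality through the interior point $x$; once the inequalities are placed correctly, your sub-arc-additivity route is an equally valid (and arguably more direct) alternative.
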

	\begin{proof}
		Let $x$, $y_1$, $y_2$, $\gamma_1$ and $\gamma_2$ be as in the assumptions. Then $S^-(x)=S^+(x)$ can be rewritten as
		\begin{equation}
			\label{s+eqs-2}
			g(y_{1})+\int_{0}^{l(\gamma_{1})}\phizero(\gamma_{1}(s),\dot{\gamma}_{1}(s))dt=g(y_{2})-\int_{0}^{l(\gamma_{2})}\phizero(\gamma_{2}(s),\dot{\gamma}_{2}(s))ds,
		\end{equation}
		which implies 
		\begin{align*}
			&d(y_{1},y_{2})\ge g(y_{2})-g(y_{1})=\\
			&=\int_{0}^{l(\gamma_{1})}\phizero(\gamma_{1}(s),\dot{\gamma}_{1}(s))ds+\int_{0}^{l(\gamma_{2})}\phizero(\gamma_{2}(s),\dot{\gamma}_{2}(s))ds=\\&=\int_{0}^{l(\gamma)}\phizero(\gamma(s),\dot{\gamma}(s))ds\ge d(y_{1},y_{2}),
		\end{align*}
		proving that $\gamma$ is a geodesic connecting $y_1$ to $y_2$.\\
		For every $z\in\gamma_{2}$, it holds:
		\begin{equation}
			\label{mugamma2}
			S^{-}(z)-S^{-}(x)=g(y_{2})-d(z,y_{2})-g(y_{2})+d(x,y_{2})=d(x,z),
		\end{equation}
		where the first equality comes from (3) of Proposition \ref{maxminextensioncurves}. Analogously, if $z\in\gamma_{1}$, $S^{-}(x)-S^{-}(z)=d(z,x)$.\\
		Let now $z_{1}\in\gamma_{1}$ and $z_{2}\in\gamma_{2}$, then:  .
		\begin{align*}
			&d(z_{1},z_{2})\ge S^{-}(z_{2})-S^{-}(z_{1})\ge S^{-}(z_{2})-S^{+}(z_{1})=
			\\&=S^{-}(z_{2})-S^{-}(x)+S^{+}(x)-S^{+}(z_{1})=d(x,z_{2})+d(z_{1},x)\ge d(z_{1},z_{2}),
		\end{align*}
		where the last equality from what we have just proved above. This proves that $S^+(z_1)=S^-(z_1)$, switching the role of $z_1$ and $z_2$ we then have that $S^{+}$ and $S^{-}$ coincides along $\gamma$. Moreover, if we call $S_{1}$ the length of $\gamma$ from $z_{1}$ to $y_{2}$ and $S_{2}$ the length of $\gamma$ from $y_{1}$ to $z_{2}$, it holds
		\begin{align}
			\label{Sz1}&S^{-}(z_{1})=g(y_{2})-\int_{0}^{S_{1}}\phizero(\gamma(s),\dot{\gamma}(s))ds \quad \mbox{and}
			\\\label{Sz2}&S^{+}(z_{2})=g(y_{1})+\int_{0}^{S_{2}}\phizero(\gamma(s),\dot{\gamma}(s))ds.
		\end{align}
		Let us prove the \eqref{Sz1}, the \eqref{Sz2} can be proven similarly. By the definition of $S^-(z_1)$ it is sufficient to prove that 
		\begin{equation}
			\label{claim}
			g(y_{2})-\int_{0}^{S_{1}}\phizero(\gamma(s),\dot{\gamma}(s))ds\ge S^{-}(z_{1}).
		\end{equation}
		Since we know that 
		\begin{equation*}
			S^-(z_1)=S^{+}(z_{1})=g(y_{1})+\int_{S_{1}}^{l(\gamma)}\phizero(\gamma(s),\dot{\gamma}(s))ds,
		\end{equation*}
		where the last equality holds again by (3) of Proposition \ref{maxminextensioncurves}, the inequality \eqref{claim} holds if and only if
		\begin{equation*}
			g(y_{2})-g(y_{1})\ge\int_{0}^{l(\gamma)}\phizero(\gamma(s),\dot{\gamma}(s))ds,
		\end{equation*}
		that is true by \eqref{s+eqs-2}. We point out that this result extends at all the points of $\gamma$ what we proved in (3) of Proposition \ref{maxminextensioncurves} for the points of  $\gamma_{1}$ and $\gamma_{2}$. We also recall that, if $u$ is a solution of \eqref{problem} then, by Proposition \ref{solutions}, $u=S^{+}=S^{-}$ along $\gamma$.
		\\ We conclude showing that the derivative along $\gamma$ of $S^{+}$ and $S^{-}$ (and also of any solution $u$ of \eqref{problem}, by Proposition \ref{solutions}) exists in every point at which the curve is differentiable and it is equal to $\phizero$. Let $\bar{s}\in[0,l(\gamma)]$ a Lebesgue point for $\phizero(\gamma(s),\dot{\gamma}(s))$, such that the curve is differentiable at $\bar{s}$, then for all what we proved above we can write
		\begin{align*}
			&\lim_{h\to 0}\frac{S^{+}(\gamma(\bar{s}+h))-S^{+}(\gamma(\bar{s}))}{h}=
			\\&=\lim_{h\to 0}\frac{1}{h}\left(g(y_{1})+\int_{0}^{\bar{s}+h}\phizero(\gamma(s),\dot{\gamma}(s))ds-g(y_{1})-\int_{0}^{\bar{s}}\phizero(\gamma(s),\dot{\gamma}(s))ds\right).
		\end{align*}
		Which is equal to 
		\begin{equation*}
			\lim_{h\to 0}\frac{1}{h}\int_{\bar{s}}^{\bar{s}+h}\phizero(\gamma(s),\dot{\gamma}(s))ds=\phizero(\gamma(\bar{s}),\dot{\gamma}(\bar{s})).
		\end{equation*}
		
	\end{proof}
	\begin{dfn}
		We will refer to the set of points
		\begin{equation*}
			\mathcal{U}:=\left\{x\in\Omega \ : \ S^{+}(x)=S^{-}(x)  \right\}
		\end{equation*}
		as \textit{uniqueness set}. What we have proved in Theorem \ref{curveofcoincidence}, is that for every point there exists a lipschitz curve all contained in $\Omega$ except for its extreme points that is a geodesic for $d$ in the sense of Proposition \ref{existencecurvebondary}.
	\end{dfn}

	\section{Regularity of solutions on the uniqueness set}
	\label{section4}
	
	In this section we will study the regularity of solutions of \eqref{problem} in the uniqueness set. More precisely, we will show that such solutions of \eqref{problem} are both locally semiconcave and locally semiconvex at each point of the uniqueness set and therefore differentiable. Moreover, if the interior part of the uniqueness set is not empty, every solution is locally $C^{1,1}$. We still assume $\pO$ to be lipschitz.
	Let us recall some definitions (see \cite{CanSin2004}) .
	\begin{dfn}[Semiconcavity and semiconvexity (see Def 1.1.1 \cite{CanSin2004})]
		We say that $u_1,u_2:\Omega\to\R^d$ are respectively semiconcave and semiconvex, if there exist $C_1,C_2\ge 0$ such that
		\begin{align}
			&\label{semiconcavity}u(x+h)+u(x-h)-2u(x)\le C_1 |h|^2,\\
			&\label{semiconvexity}u(x+h)+u(x-h)-2u(x)\ge C_2 |h|^2,
		\end{align}
		for every $x\in \Omega$, $h\in\Rd$ such that the segment $[x-h,x+h]\subset \Omega$.\\
		We say that $u_1,u_2$ are respectively locally semiconcave and locally semiconvex, if for every $V\subset\subset\Omega$, there exist $C_1(V),C_2(V)\ge 0$ such that \eqref{semiconcavity} and \eqref{semiconvexity} hold for every $x\in V$, $h\in\Rd$ such that the segment $[x-h,x+h]\subset V$.
	\end{dfn}
	
	\begin{prp}\label{semiconcavity1}
		Let us assume that $\phizero\in C^2(\Omega\times\Rd\setminus\{0\})$. Then
		$S^{+}$ and $S^{-}$ are respectively locally semiconcave and locally semiconvex.
	\end{prp}
	\begin{proof}
		The proof of this fact is strongly inspired to the proof of Lemma 5.1 in \cite{Aro2009}. We prove the \eqref{semiconcavity} for $S^+$. The proof of the \eqref{semiconvexity} for $S^-$ is analogous.
		Let us consider $\delta>0$ such that $dist(x,\pO)>8\delta$ for all $x\in V$. Since $S^+$ is bounded, if $|h|\ge\delta$ the inequality \eqref{semiconcavity} is easily verified. Let us then assume that $|h|<\delta$. 
		Let $x$ in $V$ and $y_1\in\pO$, $\gamma_1\in\pathbar(y_1,x)$ such that 
		\[S^+(x)=g(y_1)+\int_{0}^{l(\gamma_1)}\varphi^0(\gamma_1(s),\dot{\gamma}_1(s))ds. \]
		We define $a:=\sup\{t\in[0,l(\gamma_1)] \ : \ d(\gamma_1 (t),\pO ) \le 4\delta \}$ and $y^{*}=\gamma(a)$. 
		We then consider
		\begin{equation*}
			y(s):=\frac{s-a}{\ell(\gamma_1)-a}\frac{h}{|h|} \quad \text{for } a\le s\le l(\gamma_1)
		\end{equation*}
		and
		\begin{equation*}\bar{\gamma}(s):=
			\begin{cases}
				\gamma_1(s) &\quad \text{for } 0\le s\le a\\
				\gamma_1(s)+ty(s)  &\quad \text{for } a\le s\le l(\gamma_1),
			\end{cases}
		\end{equation*}
		where $t\in[-|h|,|h|]$ is some parameter at our disposal. Notice that by construction $l(\gamma_1)-a>4\delta$ and that $\bar{\gamma}(s)\in\Omega$ for every $0\le s\le l(\gamma_1)$, indeed $d(\bar{\gamma}(s),\pO)>3\delta$  for $t\in[-|h|,|h|]$.
		
		We define
		\begin{align*}
			F(t):&=\int_{0}^{l(\gamma_{1})}\varphi^0(\bar{\gamma}(s),\dot{\bar{\gamma}}(s))ds\\&=\int_{0}^{a}\varphi^0(\gamma_1(s),\dot{\gamma_1}(s))ds+\int_{a}^{l(\gamma_1)}\varphi^0(\gamma_1(s)+ty(s),\dot{\gamma_1}(s)+t\dot{y}(s))ds
		\end{align*}
		and we observe that $F(0)=S^+(x)$ and $F^+(\pm|h|)\ge S^+(x\pm h)$. In order to prove the \eqref{semiconcavity} is then enough to prove that there exists $C_1\le0$ such that  
		\begin{equation*}
			F(|h|)+F(-|h|)-2F(0)\le |h|^2C_1.
		\end{equation*}
		Thanks to the assumptions on $\phizero$ we have
		\begin{equation*}
			F'(t)=\int_{a}^{l(\gamma_1)}\nabla_x\phizero(\bar{\gamma}(s),\dot{\bar{\gamma}}(s))\cdot y(s)+\nabla_p\phizero(\bar{\gamma}(s),\dot{\bar{\gamma}}(s))\cdot\dot{y}(s)ds
		\end{equation*}
		and 
		\begin{equation}
			F''(t)=\int_{a}^{l(\gamma_1)}\left<\nabla^2\phizero(\bar{\gamma}(s),\dot{\bar{\gamma}}(s))\cdot \left(y(s),\dot{y}(s)\right)^\intercal,\left(y(s),\dot{y}(s)\right)\right>,
		\end{equation}
		where $|F''(t)|\le \frac{C_1}{2}$ for some $C_1\ge 0$ for every $t\in[-|h|,|h|]$, by continuity of $D^2\phizero$.
		The proof is concluded thanks to the Taylor's formula with the rest of Lagrange, for which there exist $\theta_1\in [0,|h|]$ and $\theta_2\in [-|h|,0]$ such that
		\begin{equation*}
			F(|h|)+F(-|h|)-2F(0)=\left(F''(\theta_1)+F''(\theta_2)\right)|h|^2.
		\end{equation*}
	\end{proof}
	\begin{prp} \label{semiconcavequivalence}
		Let $u:V\to \R^d$ be a continuous function. Then the following facts are equivalent:
		\begin{enumerate}[(a)]
			\item \label{a} $u:V\to\R$ is semiconcave (semiconvex);
			\item \label{b} there exists $C\ge 0$ such that $u(x)-\frac{C}{2}|x|^2$ is (convex) concave in $V$;
			\item \label{c} u satisfies 
			\begin{equation}\label{semicontinuitylambda}
				(1-\lambda) u(x)+\lambda u(y)- u((1-\lambda)x+\lambda y)\le \lambda(1-\lambda)C|y-x|^2,
			\end{equation}
			for every $x,y\in V$ such that $[x,y]\subset V$ and $\lambda\in[0,1]$.
		\end{enumerate}  
	\end{prp}
	More details about the above result can be found in \cite{CanSin2004}.
	\begin{prp}\label{semiconcavity2}
		If $d(y,\cdot,)\in C^{1,1}_{\mathrm{loc}}(\bO)$, uniformly w.r.t. $y$, then
		$S^{+}$ is locally semiconcave. If  $d(\cdot,y)\in C^{1,1}_{\mathrm{loc}}(\bO)$ then $S^{-}$ is locally semiconvex.
	\end{prp}
	\begin{proof}
		We start proving that $d(y,\cdot)$ is locally semiconcave. Let $V\subset\subset \Omega$. Then by Proposition \ref{semiconcavequivalence} $d(y,\cdot)$ is semiconcave in $V$ uniformly in $y$ if there exists $C\ge0$ such that $d_{C}(y,x):=d(y,x)-\frac{C}{2}|x|^2$ is concave for every $x\in V$ and $y\in\bO$.
		By assumption there exists $C>0$ such that $|\nabla_xd(y,x)-\nabla_xd(y,z)|\le 2C|x-z|$. This implies that 
		\begin{align*}
			0&\ge \left\langle \nabla_xd(y,x)-\nabla_xd(y,z),x-z \right\rangle-2C|x-z|^2\\&=\left\langle \nabla_x\left(d(y,x)-C|x|^2\right)-\nabla_x\left(d(y,z)-C|z|^2 \right),x-z\right\rangle\\&=\left\langle \nabla_xd_{C}(y,x)-\nabla_xd_{C}(y,z),x-z\right\rangle.
		\end{align*}
		The monotonicity relation and the differentiability of $d_{C}(y,\cdot)$ imply that $d_{C}(y,\cdot)$ is concave for every $x\in V$ and $y\in\bO$.
		The semiconcavity of $S^+$ is proved just recalling that by Proposition \ref{maxminextensioncurves} there exists $y_1\in\pO$ such that $S^+(x)=g(y_1)+d(y_1,x)$.  The local semiconvexity of $S^-$ can be proved analogously.
	\end{proof}
	
	\begin{dfn}[see \cite{CanSin2004} Definition 3.1.1]\label{supersubdifferential}
		For any $x\in\Omega$, the sets
		\begin{align}
			&D^-u(x):=\left\{p\in\Rd \ : \ \liminf_{y\to x} \frac{u(y)-u(x)-\langle p,y-x\rangle}{|y-x|}\ge 0  \right\}\\
			&D^+u(x):=\left\{p\in\Rd \ : \ \limsup_{y\to x} \frac{u(y)-u(x)-\langle p,y-x\rangle}{|y-x|}\le 0  \right\}
		\end{align}
		are called respectively superdifferential and subdifferential of $u$ at $x$.
	\end{dfn}
	\begin{prp}\label{propappendix}[see \cite{CanSin2004} Proposition 3.3.1]
		If $u:\Omega\to\Rd$ is locally semiconcave, then $p$ belongs to $D^+u(x)$ if and only if there exists $C=C(V)$
		\begin{equation}
			\label{equivdefsemiconc}
			u(y)-u(x)-\langle p, y-x\rangle\le C|x-y|^2,
		\end{equation}
		for every $V\subset\subset \Omega$, $x,y\in V$ such that $[x,y]\subset V$.
	\end{prp}
	\begin{proof}
		If \eqref{equivdefsemiconc} holds, then clearly $p$ belongs to $D^+u(x)$. \\Viceversa, let us assume that $p\in D^+u(x)$. 
		The inequality \eqref{semicontinuitylambda} implies that 
		\begin{align*}
			\frac{u(y)-u(x)}{|y-x|}&\le \frac{u((1-\lambda)y+\lambda x)-u(x)}{(1-\lambda)|y-x|}+C\lambda|y-x|\\&\frac{u(x+(1-\lambda)(x-y))-u(x)}{(1-\lambda)|y-x|}+C\lambda|y-x|
		\end{align*}
		The thesis follows passing to the $\limsup$ for $\lambda\to 1^-$.
	\end{proof}
	\begin{thm}\label{regularity}
		Every solution $u$ of \eqref{problem} is differentiable at each point $x$ of $\mathcal{U}$, where $\mathcal{U}$ is the uniqueness set. Moreover $\nabla S^+=\nabla u= \nabla S^-$ on $\mathcal{U}$ and if $(x_k)\subset \mathcal{U}$ is a sequence of points that convergences to $x\in \mathcal{U}$, then $\nabla u(x_{k})\to \nabla u(x)$. Finally, if $\interior{\mathcal{U}}\neq\emptyset$ then $u\in C^{1,1}(\omega)$, for every $\omega$ subdomain of $\mathcal{U}$.
	\end{thm}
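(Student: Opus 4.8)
The plan is to exploit that on $\mathcal U$ the three functions $S^-$, $u$ and $S^+$ all coincide, together with the facts that $S^+$ is locally semiconcave and $S^-$ is locally semiconvex (Prop. \ref{semiconcavity1} or Prop. \ref{semiconcavity2}) and that the ordering $S^-\le u\le S^+$ of Prop. \ref{solutions} holds on all of $\Omega$. First I would fix $x\in\mathcal U$, so that $u(x)=S^+(x)=S^-(x)$; since $u\le S^+$ with equality at $x$, the solution $u$ touches $S^+$ from below at $x$. A short computation from the definition of $D^+$ (Def. \ref{supersubdifferential}) then yields the inclusion $D^+S^+(x)\subseteq D^+u(x)$: for $p\in D^+S^+(x)$ one splits $u(y)-u(x)-\langle p,y-x\rangle=(u(y)-S^+(y))+(S^+(y)-S^+(x)-\langle p,y-x\rangle)$ and uses $u-S^+\le 0$. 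Symmetrically, $u\ge S^-$ with equality at $x$ gives $D^-S^-(x)\subseteq D^-u(x)$.

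Because $S^+$ is semiconcave we have $D^+S^+(x)\ne\emptyset$, and because $S^-$ is semiconvex we have $D^-S^-(x)\ne\emptyset$; hence both $D^+u(x)$ and $D^-u(x)$ are nonempty. The next step is the classical fact (see \cite{CanSin2004}) that a continuous function whose super- and subdifferential are both nonempty at a point is differentiable there, with $D^+u(x)=D^-u(x)=\{\nabla u(x)\}$; this proves that every solution $u$ of \eqref{problem} is differentiable at each point of $\mathcal U$. Reading the inclusions backwards, $D^+S^+(x)$ is then a nonempty subset of the singleton $\{\nabla u(x)\}$, so it equals $\{\nabla u(x)\}$, which for a semiconcave function forces $S^+$ to be differentiable at $x$ with $\nabla S^+(x)=\nabla u(x)$; the same reasoning gives $\nabla S^-(x)=\nabla u(x)$. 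Thus $\nabla S^+=\nabla u=\nabla S^-$ on $\mathcal U$.

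For the continuity of the gradient along $\mathcal U$, I would take $x_k\in\mathcal U$ with $x_k\to x\in\mathcal U$ and observe that the constraint in \eqref{problem} gives $\nabla u(x_k)\in K(x_k)\subseteq B(0,M)$, so the sequence $(\nabla u(x_k))$ is bounded. Writing $\nabla u(x_k)=\nabla S^+(x_k)\in D^+S^+(x_k)$ and using that the superdifferential of a locally semiconcave function has closed graph (\cite{CanSin2004}), every subsequential limit of $(\nabla u(x_k))$ lies in $D^+S^+(x)=\{\nabla u(x)\}$. As the sequence is bounded and all its cluster points equal $\nabla u(x)$, we conclude $\nabla u(x_k)\to\nabla u(x)$.

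Finally, if $\omega$ is an open subdomain of $\mathcal U$, then $u=S^+=S^-$ on $\omega$, so $u$ is simultaneously locally semiconcave and locally semiconvex there; the standard characterization that a function enjoying both properties on an open set is of class $C^{1,1}_{\mathrm{loc}}$ (\cite{CanSin2004}) gives $u\in C^{1,1}(\omega)$. The main obstacle I anticipate is not any single estimate but the careful bookkeeping of the touching argument transferring $D^\pm$ from $S^\pm$ to $u$, and checking that the semiconcavity results of \cite{CanSin2004} are applied on the correct neighborhoods; the clean point that makes the gradient continuity work is the a priori bound $\nabla u(x_k)\in B(0,M)$ coming directly from the gradient constraint.
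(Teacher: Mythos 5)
Your proposal is correct and follows essentially the same route as the paper: both rest on the semiconcavity/semiconvexity of $S^{\pm}$, the coincidence $S^{-}=u=S^{+}$ on $\mathcal{U}$ to transfer elements of $D^{+}S^{+}(x)$ and $D^{-}S^{-}(x)$ into $D^{+}u(x)$ and $D^{-}u(x)$, the classical fact that nonempty super- and subdifferentials force differentiability, a compactness/closed-graph argument for the continuity of $\nabla u$ along $\mathcal{U}$, and Cannarsa--Sinestrari for the $C^{1,1}$ conclusion. The only differences are cosmetic: where you cite standard results from \cite{CanSin2004} (nonemptiness of $D^{+}$ for semiconcave functions, closed graph of the superdifferential), the paper proves these inline via Rademacher's theorem and the quadratic inequality of Prop.~\ref{propappendix}, and your direct ``touching'' inclusion $D^{+}S^{+}(x)\subseteq D^{+}u(x)$ is in fact a cleaner handling of a step the paper writes somewhat loosely.
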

	\begin{proof}
		We provide here a sketch of the proof of the first part. For the last part we refer to Corollary 3.3.8 of \cite{CanSin2004}.\\
		\underline{Claim 1:} $D^+S^+(x)\neq\emptyset$ and $D^-S^-(x)\neq\emptyset$ for every $x\in\Omega$..\\
		\textit{Proof 1}. First of all we recall that if a function is locally semiconcave  in $\Omega$, then it is locally lipschitz (see for example Proposition 2.1.7 in \cite{CanSin2004}). Thus by Rademacher's theorem follows that $S^+$ is differentiable for a.e. x in $\Omega$. This means that for any $x\in \Omega$ there exists a sequence of points of differentiability $(x_k)$ that converges to $x$. By definition of subdifferential we have that $\nabla S^+(x_k)\in D^+S^+(x_k)$ for every $k$. Moreover, thanks to Proposition \ref{propappendix}, we have that 
		\begin{equation*}
			S^+(y)-S^+(x_k)-\langle \nabla S^+(x_k), y-x_k\rangle\le C|x_k-y|^2,
		\end{equation*}
		for every $y \in \Omega$ such that $[x_k,y]\in \Omega$. Then we pass to the limit and we use again Proposition \ref{propappendix}. In order to that $D^-S^-(x)\neq\emptyset$ it is enough to observe that $-S^{-}$ is locally semiconcave in $\Omega$ and that for any function $u$, $D^+(-u)=-D^-(u)$. \\
		\underline{Claim 2:} If $u$ is a solution of \eqref{problem}, then $u$ is differentiable at $x$ for every $x\in \mathcal{U}$ and $\nabla S^+(x)=\nabla S^-u(x)=\nabla u(x)$.\\
		\textit{Proof 2.} If $u$ is a solution of \eqref{problem}, then by Theorem \ref{curveofcoincidence} $u(x)=S^+(x)=S^-(x)$ for every $x\in \mathcal{U}$. Thus for every $V\subset\subset \Omega$ and $y\in\Omega $ with $[x,y]\subset V$ there exists $C=C(V)$ such that
		\begin{equation*}
			u(y)-u(x)\le S^{+}(y)-S^+(x)\le \langle\nabla S^+(x),y-x\rangle+ C|x-y|^2,
		\end{equation*}
		which by Proposition \ref{propappendix} implies that $\nabla S^+(x)\in D^+u(x)$.
		Analogously one can prove that $\nabla S^-(x)\in D^-u(x)\neq\emptyset$. Let $\theta$ be any unitary vector and $p^+\in D^+u(x)$, $p^-\in D^- u(x)$, then
		\begin{equation}
			\label{differentiability}
			\langle p^-,\theta  \rangle\le \liminf_{h\to0+}\frac{u(x+h\theta)-u(x)}{h}\le\limsup_{h\to0+}\frac{u(x+h\theta)-u(x)}{h}\le \langle p^+,\theta  \rangle,
		\end{equation}
		that implies $\langle p^--p^+, \theta  \rangle\le 0$. By the arbitrariness of the unitary vector $\theta$ we have that $p^+=p^-=\nabla u(x)=\nabla S^+(x)=\nabla S^-(x)$. \\
		\underline{Claim 3:} If $u$ is a solution of \eqref{problem}, then $\nabla u_{|\mathcal{U}}$ is continuous.\\
		\textit{Proof 3}. Let $x\in \mathcal{U}$. We consider a sequence $(x_k)\subset \mathcal{U}$ such that $x_k\to x$, then since $\nabla u (x_k)$ belongs both to $D^+u(x_k)$ and $D^-u(x_k)$ we have by Proposition \ref{propappendix} that
		\begin{align*}
			&u(y)-u(x_k)-\langle \nabla u(x_k), y-x_k\rangle\le C_1|x_k-y|^2,\\
			&u(y)-u(x_k)-\langle \nabla u(x_k), y-x_k\rangle\ge C_2|x_k-y|^2,
		\end{align*}
		for every $y\in V$ such that $[x_k,y]\in V$ and thus that $(\nabla u(x_k))$ is bounded. We prove the existence of the limit $\lim_{k\to \infty}\nabla u(x_k)$ arguing as in the \eqref{differentiability} for any cluster point.
	\end{proof}

	\section{An application to a special class of supremal variational problems}
	\label{section 5}
	In this section we will show that some classes of supremal variational problems, more precisely the one discussed in \cite{BriDep2022}, can be interpreted as special cases of our problem of constraints on the gradients. This allows for some regularity results of the absolute minimizers of those problems.
	Let $g\in \Winf(\Omega)\cap C(\bO)$ and consider the problem 
	\begin{equation}
		\label{supremal}
		\min\left\{F(v,\Omega):=\esup_{x\in\Omega}H(x,\nabla v(x)):v\in g+W^{1,\infty}(\Omega)\cap C_{0}(\Omega)\right\},
		\tag{S}
	\end{equation}
	where $H:\Omega\times\Rd\to\R$ is a Borel function satisfying the following ``natural" assumptions:
	\begin{enumerate}[(1)]
		\item $H\ge0$, $H(\cdot,0)=0$ and $H(x,\cdot)$ is quasi-convex, i.e. any sublevel $\{H(x,\cdot)\le\lambda\}$ is convex;
		\item the map $(x,p)\mapsto H(x,p)$ is uniformly (with respect to $x$) coercive in $p$, which means 
		\begin{equation*}
			\text{for every } \lambda\ge 0, \ \text{there exists } M\ge 0 \ \mbox{such that} \ \ \ H(x,p)\le\lambda \implies |p|\le M;
		\end{equation*}
		\item $H(x,\cdot)$ is lower semicontinuous for every $x\in\Omega$.
	\end{enumerate}
	The above assumptions are natural in order to provide a sufficient condition for the existence of minimizers of \eqref{supremal}. In particular, lower semicontinuity and quasi-convexity imply sequential lower semicontinuity of the supremal functional $F(u):=\esup_{x\in\Omega}H(x,\nabla u(x))$ with respect to the weak* topology of $\Winf(\Omega)$ (see Th. 3.4 in \cite{BarJenWan2001}). Furthermore, we observe that  finding a minimizer for \eqref{supremal} is equivalent to finding a solution of 
	\begin{equation*}
		\begin{cases}
			u=g \quad &\mbox{on} \ \pO,\\
			\gradu(x)\in K(x)=\left\{p \ :\ H(x,p)\le\mu\right\} \quad &\mbox{for a.e. }x \in \Omega.
		\end{cases},
	\end{equation*}
	where 
	\begin{equation*}
		\mu:=\min\left\{F(v,\Omega):=\esup_{x\in\Omega}H(x,\nabla v(x)):v\in g+W^{1,\infty}(\Omega)\cap C_{0}(\Omega)\right\},
	\end{equation*}
	indeed by (1),(2) and (3) we have that $ K(x)=\left\{p \ :\ H(x,p)\le\mu\right\}$ is convex and compact.  
	In addition to (1),(2),(3), we also require that $H$ satisfies the following properties (the same required in \cite{BriDep2022}):  
	\begin{enumerate}[(4)]
		\item For all $\lambda_{1}>\lambda_{2}\ge 0$ 
		there exists $\alpha>0$ such that 
		\begin{equation*}
			\left\{H(x,\cdot)\le\lambda_{2}\right\}+B(0,\alpha)\subset\{H(x,\cdot)\le\lambda_{1}\}, \quad \mbox{for all }x\in\Omega;
		\end{equation*}
	\end{enumerate}
	\begin{enumerate}[(5)]
		\item $(x,p)\mapsto H(\cdot, p)$ is continuous for every $(x,p)\in\Omega\times\Rd$.
	\end{enumerate}
	\begin{enumerate}[(6)]
		\item the interior part of the level set $\{H(x,p)=\lambda\}$ is empty for every $\lambda\ge 0$.
	\end{enumerate}
	
	In this setting we have that $$K(x):=\left\{p \ :\ H(x,p)\le\mu\right\} \  \mbox{and } \varphi(x,p):=\inf\left\{t>0 \ \big| \ \frac{p}{t}\in \left\{p \ :\ H(x,p)\le\mu\right\}\right\}$$ satisfy the assumptions given on $K$ and $\varphi$ (\eqref{alphaM} and \eqref{continuityphi}) at the beginning of Section \ref{section2}. \\
	In particular assumptions $(2)$ and $(4)$ imply property \eqref{alphaM} of $K(x)$ and $(5)$ implies continuity (property \eqref{continuityphi}) of $x\mapsto\varphi(x,p)$. In order to prove this last fact thanks to Proposition \ref{hausdorffdistance}, one can equivalently show that
	\[\left\{p \ :\ H(x_n,p)\le\mu\right\}\overset{H}{\longrightarrow}\left\{p \ :\ H(x,p)\le\mu\right\}, \quad \text{for every }x_n\to x \]
	with the following sketched steps:
	\begin{itemize}
		\item for every $\eps>0$, by continuity of $H$ we have that
		\begin{align*}
			&\left\{p \ :\ H(x_n,p)\le\mu\right\}\subset\left\{p \ :\ H(x,p)\le\mu+\eps\right\} \ \text{and}\\  &\left\{p \ :\ H(x,p)\le\mu\right\}\subset\left\{p \ :\ H(x_n,p)\le\mu+\eps\right\}
		\end{align*}
		for $n$ big enough;
		\item $\left\{p \ :\ H(x,p)\le\mu+\eps\right\}\subset\left\{p \ :\ H(x,p)\le\mu\right\}+B(0,f(\eps))$, for some $f$, $f(\eps)\to 0$ when $\eps\to 0$,  for example considering $$f(\eps):=\sup_{q\in \partial\left\{p \ :\ H(x,p)\le\mu+\eps\right\}}d(q,\left\{p \ :\ H(x,p)\le\mu\right\}).$$
	\end{itemize}
	Assumption (6) insure that $H(x,p)=\mu$ implies $\varphi(x,p)=1$ and it will be used for the proof of Proposition \ref{Hregularphiregular} and \ref{attainincludeduniq}.\\
	Finally we can define in this setting also:
	\begin{align*}
		&\phizero(x,q):=\sup\left\{p\cdot q  \ : \  H(x,p)\le\mu \right\},\\ &d(x,y):=\inf\left\{\int_{0}^{1}\phizero(\xi(t),\dot{\xi}(t))dt: \xi\in \mathsf{path}(x,y) \right\}.
	\end{align*}
	From now on we will consider $H:\Omega\times\Rd\to\Rd$ satisfying the assumptions $(1)$-$(5)$.
	\begin{prp}
		Let $H:\Omega\times\Rd\to\Rd$ as above. Given a boundary datum $g\in\WinfCo$, if there exists a point $x\in\Omega$ where $S^{+}(x)=S^{-}(x)$ then there is a lipschitz curve $\gamma$ passing trough $x$ such that $\gamma$ belongs to the uniqueness set.
	\end{prp}
	\begin{proof}
		The proof follows directly by Theorem \ref{curveofcoincidence}.
	\end{proof}
	The following result is not directly useful for the purpose of this paper, but it may be interesting for a better understanding of the functional $H$ and its regularity.
	\begin{prp}
		\label{Hregularphiregular}
		If $H(x,p)$ is positively $1$-homogeneous w.r.t. $p$ and $\nabla_{x} H(x,p)\in C(\Omega\times\Rd)$, then $\varphi(x,p)\in C(\Omega\times\Rd)$.
	\end{prp}
	\begin{proof}
		We first consider $p$ such that $\varphi(x,p)=1$ (that means $H(x,p)=\mu$). We have to prove that 
		\begin{equation}
			\label{phiderivable}
			\lim_{|h|\to 0}\frac{\varphi(x+h,p)-\varphi(x,p)-\grad_{x}\varphi(x,p)\cdot h}{|h|}=0
		\end{equation}
		and $\grad_{x}\varphi(x,p)$ is continuous.
		Since $\grad_{x}H$ exists and is continuous, it holds
		\begin{equation*}
			H(x+h,p)=H(x,p)+\grad_{x}H(x,p)\cdot h+o(|h|).
		\end{equation*}
		By $1$-homogeneity, we have 
		\begin{equation*}
			\frac{\mu}{\mu+\grad_{x}H(x,p)\cdot h+o(|h|)}H(x+h,p)=H\left(x+h,p\frac{\mu}{\mu+\grad_{x}H(x,p)\cdot h+o(|h|)} \right)=\mu.
		\end{equation*}
		That implies, thanks to assumption (6),
		\begin{equation*}
			1=\varphi(x+h,p\frac{\mu}{\mu+\grad_{x}H(x,p)\cdot h+o(|h|)})=\varphi(x+h,p)\frac{\mu}{\mu+\grad_{x}H(x,p)\cdot h+o(|h|)}.
		\end{equation*}
		From which we get
		\begin{equation*}
			\varphi(x+h,p)=1+\frac{\grad_{x}H(x,p)\cdot h+o(|h|)}{\mu}=\varphi(x,p)+\frac{\grad_{x}H(x,p)\cdot h+o(|h|)}{\mu}.
		\end{equation*}
		The \eqref{phiderivable} and the regularity of $\nabla_{x}\varphi $ are proved just observing that $\nabla_{x}\varphi(x,p)=\mu^{-1}\grad_{x}H(x,p)$.\\
		If $\varphi(x,p)\not=1$, we can use the same reasoning with $\frac{p}{\varphi(x,p)}$ and, exploiting the $1$-homogeneity of $\varphi$ w.r.t. to $p$, we get that  $\nabla_{x}\varphi(x,p)=\mu^{-1}\varphi(x,p)\grad_{x}H(x,p)$.
	\end{proof}
	
	\begin{crl}
		If $H(x,p)$ is positively $1$-homogeneous w.r.t. $p$, $\nabla_{x} H:\Omega\times\Rd\to\R$ is continuous and the sublevel sets $\{H(x,p)\le\mu\}$ are strictly convex for every $x\in\Omega$, then $\phizero(\cdot,q)\in C^1(\Omega)$ for any $q\in\Rd$.
	\end{crl}
	\begin{proof}
		The proof follows directly by Proposition \ref{Hregularphiregular} and \ref{differentiabilityphizero}.
	\end{proof}
	The aim now is to show for this case further properties of the uniqueness set $U$ defined in Section \ref{section3}.
	
	\begin{prp}
		\label{uniqincludedattain}
		Let $u\in\WinfCo$ be an optimal solution of \eqref{supremal}. If $x$ belongs to the uniqueness set $\mathcal{U}$ and $u$ is differentiable at $x$ then 
		\begin{equation*}
			H(x,\nabla u (x))=\esup_{x\in\Omega}H(x,\gradu(x)).
		\end{equation*}
	\end{prp}
	\begin{proof}
		By Theorem \ref{regularity} $u$ is differentiable at $x$. Since $u$ is a solution of \eqref{supremal}, we know that $\gradu(x)\in\left\{H(x,p)\le \mu \right\}$. Let $\gamma$ be a curve of the uniqueness set passing trough $x$. Let $s>0$ such that $x=\gamma(x)$. By Theorem \ref{curveofcoincidence} we know that 
		\begin{equation*}
			\gradu(x)\cdot\dot{\gamma}(s)=\lim_{t\to 0}\frac{u(x+t\dot{\gamma}(s))-u(x)}{t}=\phizero(x_{0},\dot{\gamma}(s)),
		\end{equation*}
		as stated in the last part of Theorem \ref{curveofcoincidence}.
		Then by Lemma \ref{phizeropuntuale} we have that $\gradu(x
		)$ belongs to the boundary of $K(x)=\left\{H(x,p)\le \mu \right\}$, that means $H(x,\gradu(x))= \mu $.
	\end{proof}
	\begin{rmk}
		Under the same assumption of Proposition \ref{semiconcavity1} on $\phizero$ or of Proposition \ref{semiconcavity2} on $d$, by Theorem \ref{regularity} if $x$ belongs to the uniqueness set then a solution $u$ of \eqref{supremal} is differentiable at $x$.
	\end{rmk}
	Without further regularity assumptions, it is also possible to define the functional $x\mapsto H(x,\nabla u)(x)$ for every $x\in\Omega$, in such a way that this new definition extends the definition of $x\mapsto H(x,\nabla u(x))$ to the points where $u$ is not differentiable (see Definition 4.1 in \cite{BriDep2022}). 
	\begin{dfn}\label{punctualextension}
		Let $u\in \Winf(\Omega)\cap C(\bO)$. For any $x_{0}\in\Omega$ and for any $r>0$ such that $r<dist(x_{0},\partial \Omega)$, we set
		\begin{equation}
			\mu(x_{0},r):=\inf\{\nu: u(x)-u(x_{0})\le d_{\nu} (x_0,x) \ \mbox{for any} \  x\in B(x_{0},r)\},
		\end{equation}
		Where $d_{\nu} (x_0,x)$ is the distance defined as in Definition \ref{distance} associated to the convex set $K_{\nu}(x):=\{p \ : \ H(x,p)\le \nu\}$.\\
		We observe that $\mu(x_{0},r)$ is not decreasing in $r$. This allows for the following definition:
		\begin{equation*}
			H(x_{0},\nabla u)(x_{0}):=\lim_{r\rightarrow 0}\mu(x_{0},r)= \inf_r \mu (x_0,r).
		\end{equation*}
	\end{dfn}
	\begin{rmk}
		If $u$ is differentiable in $x_0$, then $H(x_0,\nabla u (x_0))=H(x_0,\nabla u)(x_0)$ (see Proposition 4.4 and 4.5 in \cite{BriDep2022}).
	\end{rmk}
	With this definition we are able to prove the following improvement of the Proposition \ref{uniqincludedattain}:
	\begin{prp}
		
		\label{uniqincludedattainbis}
		Let $u\in\WinfCo$ be an optimal solution of the \eqref{supremal}. If $x$ belongs to the uniqueness set $\mathcal{U}$ then 
		\begin{equation*}
			H(x,\nabla u) (x)=\esup_{x\in\Omega}H(x,\gradu(x))=\mu.
		\end{equation*}
	\end{prp}
	\begin{proof}
		We have to prove that $\lim_{r\to 0}\mu(x,r)=\mu$. For every $r>0$, $\mu(x,r)\le \mu$, by Proposition \ref{1lipu}. Moreover, if we consider a curve $\gamma\subset \mathcal{U}$ passing through $x$ given by Theorem \ref{curveofcoincidence}, we have that $u(\gamma(s))-u(x)=d(x,\gamma(s))$, proving that $\mu(x,r)=\mu$ for every $r$. 
	\end{proof}
	Minimizing a supremal functional is a global problem rather than a local one. This leads often to lack of uniqueness if the minimizers, which in general are not minimizers if one restrict the problem to an open subset of the initial set. That is why it is natural to consider the so-called class of absolute minimizers instead of the bigger class of optimal solutions of \eqref{supremal}. For reader's convenience we provide below the definition.
	\begin{dfn}\label{AM}
		An absolute minimizer for \eqref{supremal} is a function $u\in W^{1,\infty}(\Omega)\cap C(\bO)$ such that $u=g$ on $\pO$ and for all open subset $V\subset\subset \Omega$ one has 
		\begin{equation*}
			\esup_{x\in V} H(x,\nabla u(x))\le\esup_{x\in V}H(x,\nabla v(x))
		\end{equation*}
		for all $v$ in $W^{1,\infty}(V)\cap C(\bV)$ such that $u=v$ on $\pV$.\\
		We remark that an absolute minimizer is an optimal solution of \eqref{supremal} (see \cite{ChaDep2007}, Lemma B.1).
	\end{dfn}

	In \cite{BriDep2022} the authors also define the \textit{attainment set} of a function $u$, $\mathcal{A}(u)$, as the set  
	\begin{equation*}
		\mathcal{A}(u):=\left\{x\in\Omega \ : \ H(x,\nabla u)(x)=\esup_{x\in\Omega}H(x,\gradu(x))=\mu  \right\}.
	\end{equation*}
	The interest of the attainment set $\mathcal{A}(u)$ lies in the fact, proved in \cite{BriDep2022}, that if $u$ is an absolute minimizer for \eqref{supremal}
	\[\mathcal{A}(u)\subset\mathcal{A}(v), \quad \text{for any }v \ \text{solution of \eqref{supremal}}.\]
	Therefore what we have in fact proved in Proposition \ref{uniqincludedattain} is that $\mathcal{A}(u)\subset \mathcal{U}$, for any $u$ solution of \eqref{supremal}. In the next theorem we show that if we consider an absolute minimizer $u$, the converse inclusion also holds, that means that $\mathcal{A}(u)= \mathcal{U}$.
	The result is based on Theorem 5.3 in \cite{BriDep2022}, which requires assumption (6) on $H$ (as shown in the Example 5.4 in \cite{BriDep2022}).
	\begin{thm}
		\label{attainincludeduniq}
		Let $u\in\WinfCo$ be an absolute minimizer \eqref{supremal} and let $x\in\Omega$ be such that $H(x,\nabla u) (x)=\esup_{x\in\Omega}H(x,\gradu(x))$. Then $x$ belongs to the uniqueness set. 
	\end{thm}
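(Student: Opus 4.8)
The plan is to deduce $x\in\mathcal{U}$ by exhibiting, through $x$, a $d$-geodesic joining two boundary points along which the absolute minimizer $u$ grows at the maximal admissible rate, and then comparing with the representation formulas for $\Sp$ and $\Sm$.

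First I would translate the hypothesis $H(x,\nabla u)(x)=\mu$ into a quantitative statement. By definition this means $\lim_{r\to 0}\mu(x,r)=\mu$, and since $r\mapsto\mu(x,r)$ is nondecreasing while Prop. \ref{1lipu} applied to the solution $u$ of \eqref{problem} gives $u(x')-u(x)\le d(x,x')=d_{\mu}(x,x')$ for all $x'$, hence $\mu(x,r)\le\mu$ for every $r$, one obtains $\mu(x,r)=\mu$ for every admissible $r>0$. Thus $u$ saturates the slope $\mu$ at $x$ at every scale.

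Next I would invoke Theorem~5.3 of \cite{BriDep2022}, which is exactly where the extra assumption (6) enters: for an absolute minimizer this theorem describes the attainment set as covered by optimal curves, and it provides, through the attainment point $x$, boundary points $y_{1},y_{2}\in\pO$ and a curve $\gamma\in\pathbar(y_{1},y_{2})$ with $x=\gamma(a)$ for some $a\in(0,l(\gamma))$, which is a geodesic for $d(y_{1},y_{2})$ and along which $u$ increases at rate $\phizero$, i.e. $u(\gamma(t))-u(\gamma(s))=\int_{s}^{t}\phizero(\gamma(\tau),\gammap(\tau))\,d\tau=d(\gamma(s),\gamma(t))$ for all $0\le s\le t\le l(\gamma)$. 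The essential point to be extracted is that a maximal-slope ray issued from an attainment point cannot terminate in the interior and therefore reaches $\pO$ on both sides of $x$.

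Then I would split $\gamma$ at $x$. Using $u=g$ on $\pO$ and the maximal-growth identity, the arc from $y_{1}$ to $x$ gives $u(x)=g(y_{1})+d(y_{1},x)$, while the arc from $x$ to $y_{2}$ gives $u(x)=g(y_{2})-d(x,y_{2})$. From the first identity and the definition of $\Sp$ as an infimum over $\pO$ one gets $\Sp(x)\le g(y_{1})+d(y_{1},x)=u(x)$, which together with the bound $u\le\Sp$ of Prop. \ref{solutions} forces $u(x)=\Sp(x)$. Symmetrically, the second identity and the definition of $\Sm$ as a supremum give $\Sm(x)\ge g(y_{2})-d(x,y_{2})=u(x)$, and with $\Sm\le u$ from Prop. \ref{solutions} one concludes $u(x)=\Sm(x)$. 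Hence $\Sp(x)=\Sm(x)$ and $x\in\mathcal{U}$.

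I expect the main obstacle to be the second step: correctly transplanting Theorem~5.3 of \cite{BriDep2022} into the present Finsler framework and checking that assumption (6) guarantees that the two-sided maximal ray through $x$ can be prolonged until it meets $\pO$ at both ends, rather than remaining trapped in $\Omega$ or degenerating to a point. Once this boundary-to-boundary optimal geodesic is available, the reduction in the first step and the comparison with $\Sp$, $\Sm$ via Prop. \ref{solutions} are routine.
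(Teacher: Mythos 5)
Your proposal is correct and follows essentially the same route as the paper: both rest on Theorem 5.3 of \cite{BriDep2022} (which is exactly where assumption (6) enters) to produce boundary points $y_{1},y_{2}\in\pO$ with $u(x)=g(y_{1})+d(y_{1},x)$ and $u(x)=g(y_{2})-d(x,y_{2})$, and then sandwich $u$ between $S^{+}$ and $S^{-}$ using their definitions as an infimum and a supremum together with Prop. \ref{solutions}. The paper's proof is simply terser: it quotes the two identities from Th. 5.3 directly, without your preliminary reduction $\mu(x,r)\equiv\mu$ or the explicit splitting of the boundary-to-boundary geodesic at $x$.
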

	\begin{proof}
		Let $x$ be as in the statement. By Theorem 5.3 in \cite{BriDep2022} we know that there exist $y_{1},y_{2}\in\pO$ such that $u(x)=g(y_{1})-d(x,y_{1})$ and $u(x)=g(y_{2})+d(y_{2},x)$. Then $S^{+}(x)\le u(x)\le S^{-}$, by definition of $S^{+}$. Moreover, by  Proposition \ref{solutions}, $S^{+}\ge u(x)\ge S^{-}$. 
	\end{proof}
	\begin{crl}
		Under the same assumption of Proposition \ref{semiconcavity1} on $\phizero$ or of Proposition \ref{semiconcavity2} on $d$, any absolute minimizer $u$ for the problem \eqref{supremal} is differentiable on $\mathcal{U}(u)$ and $\nabla _{|\mathcal{A}(u)}$ is continuous. Moreover if $\interior{\mathcal{A}(u)}\neq\emptyset$, then $u\in C^{1,1}_{\text{loc}}(\mathcal{A}(u))$.
	\end{crl}
	\begin{proof}
		The proof follows by Theorem \ref{attainincludeduniq} and Theorem \ref{regularity}.
	\end{proof}
	
	\appendix
	\section{}
	\label{appendix}
	\begin{prp}
		\label{dualitysuppfunc}
		Since $K(x)$ (and consequently $\varphi(x,\cdot)$) is convex for every $x\in\bO$, then $\varphi(x,p)=\varphi^{00}(x,p)$, for every $x\in\bO$ and $p\in\Rd$, where $\varphi^{00}$ is the support function of $\phizero$, i.e.
		\begin{equation*}
			\varphi^{00}(x,p):=\sup\{p\cdot q \ : \phizero(x,q)\le 1 \}=\sup_{q\not=0}\left\{\frac{p\cdot q}{\phizero(x,q)} \right\}.
		\end{equation*}
	\end{prp}
	\begin{proof}
		Let $x\in\bO$ and $p\in\Rd$. It's not difficult to show that $\varphi^{00}(x,p)\le \varphi(x,p)$, indeed
		\begin{equation*}
			\frac{p\cdot q}{\varphi(x,p)}\le \phizero(x,q),
		\end{equation*} 
		for every $q\in\Rd$. Then 
		\begin{equation*}
			\sup_{q\not=0}\left\{\frac{p\cdot q}{\phizero(x,q)} \right\}\le \varphi(x,p).
		\end{equation*}
		Let us prove the converse inequality: $\varphi^{00}(x,p)=\lambda\Rightarrow\varphi(x,p)\le \lambda$. Thanks to the positively $1$-homogeneity of $\varphi(x,\cdot)$ and $\varphi^{00}(x,\cdot)$, we can equivalently show that 
		\begin{equation*}
			\varphi^{00}(x,p)=1 \Rightarrow p\in K(x).
		\end{equation*}
		Let us assume by contradiction that there exists $p$ such that $p\cdot q\le 1$ for all $q$ such that $\phizero(x,q)\le 1$ and $p\notin K(x)$. By the Hahn-Banach Theorem, there exists a closed hyperplane $\{v \ : \ f\cdot v=\alpha\}$ such that 
		\begin{equation*}
			f\cdot v < \alpha < f \cdot p, \quad \mbox{for all } v\in K(x).
		\end{equation*}
		If we take $\bar{f}=\phizero(x,f)^{-1}f$, then $\phizero(x,\bar{f})=1$ and we get 
		\begin{equation*}
			\bar{f}\cdot v < \frac{\alpha}{\phizero(x,f)} < \bar{f} \cdot p\le 1, \quad \mbox{for all } v\in K(x),
		\end{equation*}
		so that $\bar{f}\cdot v<1$ for all $v\in K(x)$. On the other hand, by Proposition \ref{phizeropuntuale} we know that there exists $\bar{v}\in K(x)$ such that $\bar{f}\cdot\bar{v}=\phizero(x,\bar{f})=1$ and we get a contradiction. 
	\end{proof}
	\begin{lmm}
		\label{approximationbytrasversalcurves}
		Let $x,y\in\Omega$, $\gamma\in\Winf((0,1))\cap C([0,1])$ and $E$ such that $\Leb^{n}(E)=0$. Then for every $\eps>0$ there exists a curve $\gamma_{\eps}$ transversal to $E$ (i.e. $\mathcal{H}^{1}(\gamma_{\eps}((0,1))\cap E)=0$) such that 
		\begin{equation*}
			||\gamma_{\eps}-\gamma||_{\Winf((0,1))}<\eps.
		\end{equation*}
	\end{lmm}
	\begin{proof}
		Let $g(t)\in C^{1}[0,1]$ be a non negative function such that $g(0)=g(1)=0$. For every $v\in\mathbb{R}^{n}$, we define the curve $\gamma_{v}(t)=\gamma(t)+vg(t)$. Let $A$ be the set of the points $(t,v)\in [0,1]\times\mathbb{R}^{n}$ such that $\gamma_v(t)\in E$ and $A_{t}:=\{v\in\mathbb{R}^{n}: (t,v)\in A\}$. Since $\Leb^{n}(E)=0$, $\Leb^{n}(A_{t})=0$ for every fixed $t\in[0,1]$ and therefore also $\Leb^{n+1}(A)=0$. This implies that $A_{v}:=\{t\in[0,1]: (t,v)\in A\}$ is such that $\Leb^{1}(A_{v})=0$ for a.e. $v\in\mathbb{R}^{n}$. Let $v$ such that $\Leb^{1}(A_{v})=0$, then $\gamma_v$ is transversal to $E$. Indeed by lipschitzianity of $\gamma_{v}$ we have that $\mathcal{H}^{1}(\gamma_{v}(A_{v}))=0$ and we conclude by observing that $\gamma_{v}(A_{v})=\gamma_{v}([0,1])\cap E$.\\
		Finally, we choose $v$ such that $|v|<{\eps}/{||g(t)||_{\Winf([0,1])}}$, so to obtain
		\begin{equation*}
			||\gamma_{v}-\gamma||_{\Winf((0,1))}\le|v||g||_{\Winf((0,1))}<\eps.
		\end{equation*}
	\end{proof}
	\begin{lmm}
		\label{approxlipsets}
		Let $B_{1}^{d-1}(0)\subset\R^{d-1}$, the open ball of radius $1$ and center $0$ in $\R^{d-1}$. Let $f:B_{1}^{d-1}(0)\to\R$ a lipschitz function. Let $\Omega=\left\{(x',x_{d}) \ : \ x'\in B_{1}^{d-1}(0), x_{d}>f(x')  \right\}$. Then for every lipschitz curve $\gamma:[0,1]\to\bO$ there exists a sequence of curves $(\gamma_{n})$, such that 
		\begin{enumerate}[1.]
			\item $\gamma_{n}([0,1])\subset\Omega$;
			\item $\gamma_{n}\to\gamma$ uniformly;
			\item  $\dot{\gamma}_{n}\to\gamma$ a.e..
		\end{enumerate}
	\end{lmm}
	\begin{proof}
		We take a sequence of functions $(f_{n}):B_{1}^{d-1}(0)\to\R$ such that 
		\begin{itemize}
			\item $f_{n}\in C^{1}(B_{1}^{d-1}(0))$ for every $n\in\mathbb{N}$;
			\item $f_{n}\to f$ in $\Winf((B_{1}^{d-1}(0))$;
			\item $f_{n}>f$.
		\end{itemize}
		Let $\pi((x',x_{d})=x'$ the projection of $\Rd$ in $\R^{d-1}$. We define 
		\begin{equation*}
			\gamma_{n}(s):=\gamma(s)+\left(0, f_{n}(\pi(\gamma(s)))  \right)-\left(0, f(\pi(\gamma(s))). \right)
		\end{equation*}
		Clearly $\gamma_{n}([0,1])\subset\Omega$ and $\gamma_{n}\to\gamma$ uniformly. Moreover
		\begin{align*}
			&\dot{\gamma}_{n}(s)=\dot{\gamma}(s)+(0, \left<\grad f_{n}(\pi\circ\gamma),\grad\pi(\gamma(s))\dot{\gamma}(s)\right>) -(0, \left<\grad f(\pi\circ\gamma),\grad\pi(\gamma(s))\dot{\gamma}(s)\right>)=\\
			&=\dot{\gamma}(s)+ (0, \left<\grad f_{n}(\pi \circ \gamma)- \grad f(\pi\circ\gamma),\pi(\dot{\gamma}(s))\right>).
		\end{align*}
		Then 
		\begin{equation*}
			|\dot{\gamma}_{n}(s)-\gamma(s)|\le \left|\left|\grad f_{n}-\grad f\right|\right|_{L^{\infty}}|\dot{\gamma}(s)|.
		\end{equation*}
	\end{proof}
	
	\section*{Acknowledgement}
	The research of the author is partially financed  by the {\it ``Fondi di ricerca di ateneo, ex 60 $\%$''}  of the  University of Firenze and is part of the project  {\it "Alcuni problemi di trasporto ottimo ed applicazioni"}  of the  GNAMPA-INDAM.
	
	\bibliographystyle{plane}
	
\end{document}